\documentclass[a4paper,10pt]{article}
\usepackage{geometry}                
\usepackage[active]{srcltx}
\usepackage{hyperref}
\usepackage[normalem]{ulem}

\usepackage{amsmath}
\usepackage{amssymb}
\usepackage{parskip}
\usepackage{mathtools}
\usepackage{fullpage}

\usepackage{mathrsfs}
\usepackage{amsthm}
\usepackage[title]{appendix}
\usepackage{physics}
\usepackage{bbm}

\usepackage[utf8]{inputenc}
\usepackage{caption}
\usepackage{subcaption}
\usepackage{graphicx}
\usepackage{attachfile}
\usepackage{navigator}
\usepackage{theoremref}
\usepackage{faktor}
\usepackage{musicography}
\usepackage{accents}
\usepackage{tikz}
\usetikzlibrary{angles,quotes}

\newtheorem{theorem}{Theorem}[section]
\newtheorem{maintheorem}{Theorem}
\newtheorem{question}{Question}

\newtheorem{definition}[theorem]{Definition}
\newtheorem{proposition}[theorem]{Proposition}
\newtheorem{corollary}[theorem]{Corollary}
\newtheorem{lemma}[theorem]{Lemma}
\newtheorem{remark}[theorem]{Remark}

\newtheorem{assumption}[theorem]{Assumption}

\numberwithin{equation}{section}

\newcommand{\R}{\mathbb{R}}

\newcommand{\N}{\mathbb{N}}

\newcommand{\e}{\varepsilon}

\newcommand{\supp}{\mathrm{supp}}
\newcommand{\Lip}{\mathrm{Lip}}

\newcommand{\vol}{\mathrm{vol}}

\newcommand{\Ch}{\mathrm{Ch}}
\newcommand{\cd}{\mathsf{CD}}
\newcommand{\rcd}{\mathsf{RCD}}

\newcommand{\sfd}{\mathsf d}
\newcommand{\di}{{\rm d}}

\newcommand{\Capa}{\mathrm{Cap}}
\newcommand{\hh}{\mathcal{H}}

\newcommand{\LM}[1]{\hbox{\vrule width.2pt \vbox to#1pt{\vfill \hrule
width#1pt height.2pt}}}
\newcommand{\LL}{{\mathchoice
{\,\LM7\,}{\,\LM7\,}{\,\LM5\,}{\,\LM{3.35}\,}}}

\title{Infinitesimal Hilbertianity for manifolds with Sobolev regular Riemannian metrics through uniform lower capacity bounds}
\author{Vanessa Ryborz\thanks{University of Vienna. email: vanessa.emily.ryborz@univie.ac.at \\
ORCID: 0009-0009-4578-3620
}}
\date{}

\begin{document}

\maketitle
\begin{abstract}
 We prove that metric measure spaces arising from $2$-manifolds equipped with Riemannian metrics $g$ such that $g, g^{-1} \in L^\infty_{\rm loc} \cap W^{1,p}_{\rm loc}$ are infinitesimally Hilbertian. Along the way, we establish uniform lower $1$-capacity bounds of several $1$-dimensional subsets of $\R^2$.
 \let\thefootnote\relax\footnotetext{\textit{MSC2020:} 53C20, 53C23 \\
 \textit{Keywords}.  Riemannian manifolds of low regularity, metric measure spaces, infinitesimal Hilbertianity, $p$-Capacity}
\end{abstract}

\tableofcontents

{\small \textbf{Acknowledgements.} 
The author gratefully acknowledges Andrea Mondino, Alexander Lytchak, Paul Schmitt, and Paul Paschmanns for valuable discussions. This research was funded in whole or in part by the Austrian Science Fund (FWF) 10.55776/STA32. For open access purposes, the author has applied a CC BY public copyright license to any author accepted manuscript version arising from this submission. 
}

\section{Introduction}
This paper can be seen as a continuation of the results obtained in \cite{ryborz2026infinitesimal}.

Manifolds with Riemannian metrics of low regularity (in particular below $C^2$) have been of growing interest and importance in the field of geometric analysis. 
They automatically enter the broader framework of metric measure spaces. A metric measure space is a triplet $(X, \sfd, \mathfrak{m})$, where $(X, \sfd)$ is a complete and separable metric space and $\mathfrak{m}$ is a $\sigma$-finite Borel measure on $X$ (playing the role of the volume measure). 

Several concepts from Riemannian geometry were generalised to metric measure spaces such as synthetic notions of the Ricci curvature bounded below by $K$ and the dimension bounded above by $N$ for some $K \in \R$ and $N \in [1, \infty]$; the curvature dimension condition $\cd(K, N)$ defined by Lott-Villani \cite{lott2009ricci, LottVillaniJFA} and Sturm \cite{sturm2006geometryI, sturm2006geometryII}.    

Our interest throughout this paper lies in the first order calculus on metric measure spaces, which in general do not provide an underlying differential structure.  After the initial approach by Cheeger \cite{cheeger1999differentiability}, followed by the approaches by Haj\l{}asz \cite{hajlasz1996sobolev}, and Shanmugalingam \cite{shanmugalingam2000newtonian}, Ambrosio, Gigli and Savaré introduced a Sobolev calculus based on the notion of test plans in \cite{ambrosio2014inventio} that they proved to be equivalent to the approaches in \cite{cheeger1999differentiability} and \cite{shanmugalingam2000newtonian}, leading to the definition of the synthetic Sobolev space $W^{1,2}(X)$, and a Laplace operator. 

In \cite{gigli2015differential}, Gigli studied conditions under which the Laplacian is linear and proposed the notion of \textit{infinitesimal Hilbertianity}. A metric measure space is infinitesimally Hilbertian, if the associated Sobolev space $W^{1,2}(X)$ is a Hilbert space, or equivalently if the Cheeger energy (a generalisation of the Dirichlet energy to metric measure spaces) is a quadratic form. This criterion serves to detect those metric measure spaces that resemble Riemannian manifolds on an infinitesimal scale, among possible Finslerian structures. 
 
Examples of infinitesimally Hilbertian metric measure spaces include geodesically complete weighted Riemannian manifolds \cite{luvcic2020infinitesimal}, locally $\mathrm{CAT}(\kappa)$-spaces equipped with any Borel measure  \cite{di2021infinitesimal}, and sub-Riemannian manifolds equipped with any Borel measure \cite{le2023universal}.
In \cite{ambrosio2014duke}, Ambrosio, Gigli, and Savaré proposed the Riemannian curvature dimension condition, $\rcd(K, \infty)$, a refinement of the curvature dimension condition which combines the $\cd(K, \infty)$-condition with infinitesimal Hilbertianity.
The $\rcd(K, N)$-condition for $N<\infty$ was proposed by Gigli in \cite{gigli2014overview}. 
Using this terminology, the metric measure space $(\R^n, \norm{\cdot}, \mathcal{L}^n)$ is an $\rcd(0, N)$-space if and only if $\norm{\cdot}$ arises from an inner product, hence in the setting of the Euclidean space infinitesimal Hilbertianity indeed detects Riemannian structures among Finslerian ones.

The main objective of this paper is to compare the synthetic first order calculus on metric measure spaces with the classical first order calculus on manifolds with non-smooth Riemannian metrics. In the case of a smooth manifold $M$ equipped with a continuous Riemannian metric $g$ such comparison questions were previously studied in \cite{burtscher2012length, mondino2025equivalence}.

Various relevant classes of Riemannian metrics are non-continuous, including for instance for instance the Geroch-Traschen class of metrics \cite{geroch1987strings}, i.e. those Riemannian metrics $g$ such that $g, g^{-1} \in L^\infty_{\rm loc}(M)$ and $g \in W^{1,2}_{\rm loc}(M)$. These regularity assumptions are minimal to compute the distributional Ricci curvature on $M$ (see \cite{grosser2013geometric}, \cite{graf2020singularity}) and have been studied in both Riemannian (see for example \cite{grant2011synthetic}) and Lorentzian signature (see \cite{steinbauer2009geroch}). Moreover, in the context of potential theory for uniformly elliptic operators, the even broader class of (Riemannian) metrics $g$ such that $g, g^{-1} \in L^\infty_{\rm loc}(M)$ has been studied in \cite{saloff1992uniformly}, \cite{sturm1995analysis}, \cite{norris1997heat} and \cite{de1991integral}. 

If the metric tensor is merely measurable, we work with the Riemannain distance $\sfd_g$ defined by Norris (cf. \cite{norris1997heat}) and De Cecco-Palmieri (cf. \cite{de1991integral}) defined the Riemannian distance $\sfd_g$ for Riemannian metrics $g$ that satisfy $g, g^{-1} \in L^\infty_{\rm loc}$ (see Subsection \ref{sec:3}). Their definition is consistent with the definition of the Riemannian distance induced by continuous Riemannian metric tensors, but avoids the difficulty that the image of a continuous curve is a null set (which prevents us from using the usual definition of the Riemannian distance).

To study infinitesimal Hilbertianity, a central quantity is the metric speed of absolutely continuous curves (\cite{ambrosio1990metric, kirchheim1994rectifiable}). Consequently, we will focus on the following two comparison questions on manifolds $M$ equipped with non-smooth Riemannian metrics $g$.
\smallskip

\begin{question}[see also Question 2 in \cite{ryborz2026infinitesimal}]\label{q1}
   Under which regularity assumptions on $g$ does the metric speed of ``almost every" absolutely continuous curves equal the norm of its derivative, i.e.  $\lim_{h \to 0} \frac{\sfd_g(\gamma_t, \gamma_{t+h})}{|h|}=|\dot{\gamma}_t|_g$ for almost every $t \in [0,1]$?
\end{question}
Here, ``almost every" should be understood in the sense of test plans, see  Subsection \ref{sec:2}.
\smallskip

\begin{question}[see also Question 3 in \cite{ryborz2026infinitesimal}]\label{q2}
   Is the metric measure space $(M, \sfd_g, \vol_g)$ infinitesimally Hilbertian?
\end{question}
Related questions were posed by De Giorgi in \cite{de1990conversazioni}.

The results in \cite{burtscher2012length, mondino2025equivalence} show that if the Riemannian metric $g$ is continuous, then both Question \ref{q1} and \ref{q2} have positive answer.

If $g$ merely satisfies $g, g^{-1}\in L^\infty_{\rm loc}$, then Sturm establishes, that there exist cases, where the answer to Question \ref{q1}, if one does not assume the metric $g$ to be continuous \cite{sturm1997diffusion}. 
Moreover, in \cite{de1995lip}, De Cecco and Palmieri present one example of a metric $g$ on $\R^2$ with $g, g^{-1} \in L^\infty$ and such that on a positive measure set of points, the tangent is a normed space, with a norm that does not arise from an inner product hinting that Question \ref{q2} may have negative answer under these broad regularity assumptions.
 
The examples in \cite{ryborz2026infinitesimal} show that there exist manifolds of dimension $d\geq 3$, and Riemannian metrics $g$ on $M$ such that $g, g^{-1} \in L^\infty\cap W^{1,d-1}_{\rm loc}$ (and hence $W^{1, p}_{\rm loc}$ for $p \in [1, d-1]$) for which both Question \ref{q1} and \ref{q2} have negative answer. Moreover, \cite{ryborz2026infinitesimal} establishes that there exist $2$-manifolds with Riemannian metrics $g$ such that $g, g^{-1} \in L^\infty$ and such that Question \ref{q2} has negative answer.
Local Sobolev regularity can be seen as a bridge from $L^\infty$ to $C^0$, hence a natural idea is to study whether the Sobolev regularity is a discriminant for the above questions.

The main result of this paper is:
\smallskip

\begin{maintheorem}[see Theorem \ref{2d_infinitesimal_hilbertianity}]
    Let M be a $2$-manifold and $g$ be a Riemannian metric on $M$ such that $g, g^{-1} \in L^\infty_{\rm loc}\cap W^{1,p}_{\rm loc}$ for some $p \geq 1$. Then $(M, \sfd_g, \vol_g)$ is infinitesimally Hilbertian.  
\end{maintheorem}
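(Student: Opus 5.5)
The proof should reduce infinitesimal Hilbertianity to a statement about metric speeds along test plans. Then the $W^{1,p}$ regularity enters only through a capacity argument showing that curves along which $g$ behaves badly are negligible.

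\textbf{Localisation.} Infinitesimal Hilbertianity is local, so I would work in a chart $U\subset\R^2$ with $\Lambda^{-1}|v|^2\le g_x(v,v)\le \Lambda|v|^2$. The coefficients $g_{ij}$ lie in $W^{1,p}(U)\cap L^\infty$. In this situation $\sfd_g$ is bi-Lipschitz to the Euclidean distance and $\vol_g$ has density bounded above and below. Hence $W^{1,2}(U,\sfd_g,\vol_g)$ agrees with $W^{1,2}(U)$ as a set, and the question is only which energy the Cheeger energy is.

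\textbf{Target identity.} The aim is to show that the Cheeger energy equals $\int |\nabla f|_g^2\,\di\vol_g$, which is visibly quadratic. By the test plan characterisation of the minimal weak upper gradient, this identity follows once one knows that for every test plan $\pi$ and $\pi$-a.e.\ curve $\gamma$,
\[
|\dot\gamma_t|_{\sfd_g}=|\dot\gamma_t|_g \quad \text{for a.e. } t.
\]
This is exactly a positive answer to Question \ref{q1}.

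\textbf{Easy inequality.} One direction, $|\dot\gamma_t|_{\sfd_g}\ge|\dot\gamma_t|_g$, should hold for $\pi$-a.e.\ curve with merely $L^\infty$ metrics. The reason is that the Norris/De Cecco--Palmieri distance is obtained by approximation through curves avoiding null sets. Test plans have bounded compression, so $\pi$-a.e.\ curve meets a given null set only in a time-null set, and lower semicontinuity then gives this bound.

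\textbf{Hard inequality.} The reverse inequality $|\dot\gamma_t|_{\sfd_g}\le |\dot\gamma_t|_g$ is the main obstacle. For it I would use the fine properties of the Sobolev representative of $g$. Take $p$ small; the case $p>2$ is trivial since then $g$ is continuous and the earlier results \cite{burtscher2012length, mondino2025equivalence} apply. Then $g$ has a quasicontinuous representative, defined and Lebesgue-approximately continuous outside a set $E$ of zero $p$-capacity; for $p\ge1$ this at least means $\mathcal H^1$-small in a suitable sense. The key claim is that at points $x\notin E$ the metric tangent is Euclidean with inner product $g(x)$. To get it, I compare $\sfd_g(x,x+hv)$ with $|hv|_{g(x)}$. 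I take a one-parameter family of nearby parallel segments and rescaled curves joining $x$ to $x+hv$. Averaging the length of these curves, and using that $g$ is close to $g(x)$ in the $L^1$ average on small balls, shows that some competitor has $g$-length at most $(1+o(1))|hv|_{g(x)}$. The competitors must be genuine paths in the De Cecco--Palmieri sense, so one has to avoid sets that are bad at scale $h$. This is where uniform lower $1$-capacity bounds for families of $1$-dimensional sets (segments, broken lines, circles) in $\R^2$ are needed. They show that the family of joining curves cannot be entirely contained in a set of small capacity, so a good curve exists at every scale.

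\textbf{Conclusion.} Finally, since $\pi$-a.e.\ $\gamma$ spends zero time in the capacity-null set $E$, the pointwise tangent statement gives equality of speeds for a.e.\ $t$. Plugging this into the upper-gradient characterisation yields $|Df|=|\nabla f|_g$ a.e. Therefore the Cheeger energy satisfies the parallelogram law.

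I expect the main difficulty to be the hard-inequality step. There one must quantify, uniformly in the scale, that $p$-capacity-small sets cannot block all short connecting curves in dimension two. This is where $d=2$ (codimension one) is essential, consistent with the counterexamples in $d\ge3$.
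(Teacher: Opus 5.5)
You have the two inequalities the wrong way round, and this is a genuine gap. The bound $|\dot\gamma_t|\le|\dot\gamma_t|_g$ is the one that holds for merely $L^\infty_{\rm loc}$ metrics, for curves transversal to $N_g$ and hence $\boldsymbol{\pi}$-a.e. This is Proposition 3.2 of \cite{ryborz2026infinitesimal}, quoted in Subsection \ref{sec:3}. It is an upper bound on $\sfd_g$, so it only requires exhibiting competitors, and no capacity is needed there.

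The bound $|\dot\gamma_t|\ge|\dot\gamma_t|_g$, which you call easy, is false without further regularity. On the unit square let $g=a\,\mathrm{Id}$, with $a=\varepsilon$ on thin open strips around all rational horizontal and vertical lines (total area at most $4\varepsilon$) and $a=1$ elsewhere. Every $w$ with $|\nabla_g w|_g\le 1$ a.e.\ has $|\nabla w|\le\sqrt{\varepsilon}$ a.e.\ on the strips. Since the strips are convex and dense, $w$ is $\sqrt{\varepsilon}$-Lipschitz along every horizontal and vertical line. Norris' formula then gives $\sfd_g(x,y)\le\sqrt{\varepsilon}\,(|x_1-y_1|+|x_2-y_2|)$. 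So the metric speed of $t\mapsto x+tv$ is at most $\sqrt{2\varepsilon}\,|v|$, while $|v|_{g(x)}=|v|$ at every density point of $\{a=1\}$, a set of positive measure. This is the De Cecco--Palmieri/Sturm phenomenon recalled in the introduction.

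No lower-semicontinuity argument can rule this out, because the near-optimal curves for $\sfd_g(\gamma_s,\gamma_t)$ are shortcuts different from $\gamma$. Likewise, $L^1$-closeness of $g$ to $g(x)$ on small balls, which is all your averaging uses, holds at every such density point, so it cannot be what excludes shortcuts. This inequality is also exactly what is needed for $|\nabla_g f|_g$ to be a weak upper gradient, i.e.\ for $|Df|_w\le|\nabla_g f|_g$.

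So the $W^{1,p}$ hypothesis must be used to exclude shortcuts, and your capacity step, which looks for a \emph{good} competitor curve, is aimed at the wrong inequality. In the paper (Proposition \ref{2d_metric_speed_sobolev}) one argues by contradiction as follows.
Suppose $|\dot\gamma^{x,v}_0|\le|v|_{g(x)}-\zeta$ on a set $S$ of positive measure. Then the $g_\varsigma$-geodesics from $x$ to $x+rv$ are uniformly bi-Lipschitz. Because $|rv|_{g(x)}\le\int_0^r|\dot\gamma_t|_{g(x)}\,\di t$, they spend a set of times of measure $\gtrsim\zeta r$ at points where $g_\varsigma$ is $\gtrsim\zeta$ away from $g(x)$. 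Quasicontinuity of $g$, together with uniform convergence of $g_\varsigma$ off an open set $\Omega$ of arbitrarily small $\Capa_1$, forces these points into $\Omega$. Lemma \ref{2dcapacity_of_line_segments} gives a uniform lower bound for functions equal to $1$ on bi-Lipschitz images of finite unions of intervals. Rescaling it to the balls of a disjoint Besicovitch subfamily, summing, and applying the Sobolev embedding yields $\Capa_1(\Omega)\gtrsim\zeta\,\mathcal{L}^2(S)$, a contradiction.

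Your localisation and the final passage from metric speeds to $|Df|_w=|\nabla_g f|_g$ and the parallelogram law match the paper. The missing idea is to apply the capacity bound to the shortcut curves themselves, which are forced to lie in the small-capacity set for a definite fraction of their length.
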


\textbf{Outline of the paper and strategy.}
In Section \ref{sec:cap}, we will study the Sobolev $p$-capacity as from \cite{evans2025measure}. In Subsection \ref{subsec:def_cap} we review the definition and some basic properties of the $p$-capacity. Thereafter, in Subsection \ref{subsec:cap_bound}, we prove uniform lower $1$-capacity bounds for bi-Lipschitz images of one dimensional sets into $\R^2$.
In Section \ref{sec:application}, we apply said bounds to compare the synthetic and classical first order calculus on  $2$-manifolds with non-smooth Riemannian metrics.  
In Subsection \ref{sec:2}, we will recall some basic notions on the first order calculus on metric measure spaces. In Section \ref{sec:3}, we recall the definition of the distance in the case of a non-continuous Riemannian metric by Norris and De Cecco-Palmieri and explain how a manifold $M$ equipped with a non-continuous Riemannian metric $g$ enters the metric measure space setting. Thereafter, we briefly study absolutely continuous curves and test plans in that setting. 
In Subsection \ref{sec:4}, we will use the results from Section \ref{sec:cap} to establish positive answers to Questions \ref{q1} and \ref{q2} if $M$ is two-dimensional and $g$ satisfies $g, g^{-1} \in L^\infty_{\rm loc}\cap W^{1,p}_{\rm loc}$. Question \ref{q1} is addressed in Proposition \ref{2d_metric_speed_sobolev} and Question \ref{q2} is addressed in Theorem \ref{2d_infinitesimal_hilbertianity}.
\smallskip

\subsection*{Notation}
Given a manifold $M$, a Riemannian metric $g$, and a locally Lipschitz continuous function $f:M \to \R$, $|Df|_w$ will always denote its minimal weak upper gradient, $\nabla_gf$ its gradient with respect to $g$. Given a (weakly) differentiable function $f:\R^d \to \R$, we denote by $Df= \nabla_{euc} f$ the (weak) Euclidean gradient. \\
Given an absolutely continuous curve $\gamma:[0,1] \to M$, we will for almost every $t \in [0,1]$ denote its metric speed at time $t$ by $|\dot{\gamma}_t|$, and its derivative at time $t$ by $\dot{\gamma}_t \in T_{\gamma_t}M$. To avoid confusion, if we take the norm of the derivative, we will always write $|\dot{\gamma}_t|_g$ for $\sqrt{g(\dot{\gamma}_t,\dot{\gamma}_t)}$ and $|\dot{\gamma}_t|_{euc}$ for $\sqrt{\langle\dot{\gamma}_t,\dot{\gamma}_t\rangle_{euc}}$ (given specified local coordinates). \\

\subsection*{Disclaimer}
This is a rather rough version of this work, which I decided to upload at this stage with the aim to revise and polish the manuscript further in the future. I recently noticed a paper by Zhong \cite{zhong2026scoop} working on the same topic which motivated me to post my independent non-AI supported work sooner rather than later.
\section{Capacity bounds}\label{sec:cap}
\subsection{Fine properties of Sobolev functions}\label{subsec:def_cap}
In the following, we recall some of the theory on the capacity and its relationship with Sobolev functions, as it is presented in \cite[Chapter 4]{evans2025measure}.
\begin{definition}
    For $d \in \N$, $1 \leq p<d$, define the set 
    \begin{align*}
        K^p:= \{f: \R^d \to \R|\ f \geq 0, f \in L^{\frac{pd}{d-p}}(\R^d), Df \in L^p(\R^d, \R^d)\}.
    \end{align*}
    Given a set $A \subset \R^d$, define the $p$-capacity as
    \begin{align*}
        \mathrm{Cap}_p(A):= \inf\Big\{ \int_{\R^n} |Df|^p\, \di x \big|\ f \in K^p, A \subset \{f \geq 1\}^\circ\Big\},
    \end{align*}
    where $\{f \geq 1\}^\circ$ denotes the interior of the set $\{f \geq 1\}$.
\end{definition}
By \cite[Theorem 4.14]{evans2025measure}, $\Capa_p$ is an outer measure on $\R^d$. 
\begin{theorem}[\cite{evans2025measure}, Theorem 4.15]\label{basic_properties_capa}
    Assume $A \subset \R^d$. Then
    \begin{itemize}
        \item[(i)] $\Capa_p(A) = \inf\{\Capa_p(U)|\ U\ \mathrm{open,\ } A \subset U\}$. 
        \item[(ii)] $\Capa_p(\lambda A)= \lambda^{d-p} \Capa_p(A)$  for $\lambda>0$.
        \item[(iii)] $\Capa_p(LA)= \Capa_p(A)$ for any affine isometry $L: \R^d \to \R^d$.
        \item[(iv)] $\Capa_p(A)\leq C \hh^{d-p}(A)$ for some constant $C$ depending only on $d$ and $p$. 
        \item[(v)] $\mathcal{L}^d(A) \leq C\Capa_p(A)^{\frac{d}{d-p}}$ for some constant $C$ depending only on $d$ and $p$. 
    \end{itemize}
\end{theorem}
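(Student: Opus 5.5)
We would prove the five items directly from the definition of $\Capa_p$, using throughout that $K^p$ is invariant under composition with affine maps and under taking pointwise maxima and minima, together with the Gagliardo--Nirenberg--Sobolev inequality $\|f\|_{L^{p^*}} \leq C\|Df\|_{L^p}$ for $f\in K^p$, where $p^*=\frac{pd}{d-p}$.

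\emph{Items (ii), (iii) and (i).} For (ii) and (iii) we transport test functions. If $f\in K^p$ with $A\subset\{f\geq 1\}^\circ$, set $f_\lambda(x)=f(x/\lambda)$ and $f_L = f\circ L^{-1}$. Then $\lambda A\subset\{f_\lambda\geq1\}^\circ$ and $LA\subset\{f_L\geq 1\}^\circ$. The change of variables gives
\begin{align*}
\int|Df_\lambda|^p\,\di x=\lambda^{d-p}\int|Df|^p\,\di x,
\end{align*}
while $|Df_L|=|Df|\circ L^{-1}$ preserves the energy. Applying the same argument to the inverse maps yields equality in both cases. For (i), the inequality ``$\leq$'' holds by monotonicity of the outer measure. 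For ``$\geq$'', take a near-optimal $f$ and put $U=\{f\geq1\}^\circ$. This set is open, contains $A$, and $f$ itself is admissible for $U$, so $\Capa_p(U)\leq\int|Df|^p\,\di x$.

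\emph{Item (v).} Let $f$ be admissible for $A$. Since $A\subset\{f\geq1\}$, Chebyshev's inequality and Gagliardo--Nirenberg--Sobolev give
\begin{align*}
\mathcal L^d(A)\leq\int f^{p^*}\,\di x\leq C\Big(\int|Df|^p\,\di x\Big)^{\frac{d}{d-p}}.
\end{align*}
Taking the infimum over admissible $f$ proves (v).

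\emph{Item (iv).} This is the main obstacle. We may assume $\hh^{d-p}(A)<\infty$. Fix $\delta>0$ and cover $A$ by balls $B(x_i,r_i)$ with $r_i<\delta$ and $\sum r_i^{d-p}\leq C\hh^{d-p}(A)+\delta$. Next we build cutoffs $f_i$ that equal $1$ on $B(x_i,2r_i)$, vanish outside $B(x_i,4r_i)$ and satisfy $|Df_i|\leq 1/r_i$. Each then has energy at most $Cr_i^{d-p}$, and each ball $B(x_i,r_i)$ lies in the interior of $\{f_i\geq1\}$. Set $f=\sup_i f_i$. Then $A\subset\{f\geq1\}^\circ$ and $|Df|^p\leq\sum_i|Df_i|^p$ almost everywhere, so
\begin{align*}
\int|Df|^p\,\di x\leq C\sum_i r_i^{d-p}.
\end{align*}
Membership $f\in L^{p^*}$ also needs checking. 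We handle it by truncating to finite subfamilies, which lie in $K^p$, and passing to monotone limits; the GNS bound keeps the $L^{p^*}$ norms uniformly controlled, and the weak gradient bound survives the limit. Letting $\delta\to0$ gives $\Capa_p(A)\leq C\hh^{d-p}(A)$. The delicate points are justifying that the countable supremum lies in $K^p$, and the usual comparison between the spherical and the standard Hausdorff measure. The latter costs only a dimensional constant, which is absorbed into $C$.
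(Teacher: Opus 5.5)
Your proof is correct. The paper gives no proof of its own; it quotes the statement from Evans--Gariepy, and your argument is essentially the standard proof there: change of variables for (ii)--(iii), the open set $\{f\geq 1\}^\circ$ for (i), Chebyshev plus Gagliardo--Nirenberg--Sobolev for (v), and small-ball cutoffs $f_i$ with $f=\sup_i f_i$ for (iv). If you prefer to sidestep the delicate supremum step in (iv), you can instead invoke countable subadditivity of $\Capa_p$ (recorded in the paper) together with (ii), (iii) and $\Capa_p(B_1(0))<\infty$, which gives $\Capa_p(A)\leq \Capa_p(B_1(0))\sum_i r_i^{d-p}$ directly.
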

\smallskip


\smallskip

\begin{theorem}[\cite{evans2025measure}, Theorem 5.12]
    Let $d \geq 2$ and $A \subset \R^d$ be compact. Then $\Capa_1(A) = 0$ if and only if $\hh^{n-1}(A)=0$.
\end{theorem}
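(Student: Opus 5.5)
The statement has two implications, which I will treat separately. Note that $n$ in the statement should read $d$. I will work directly with the definition of $\Capa_1$ through functions in $K^1$, together with Theorem \ref{basic_properties_capa}.

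\textbf{The direction $\hh^{d-1}(A)=0 \Rightarrow \Capa_1(A)=0$.} This is immediate from Theorem \ref{basic_properties_capa}(iv) with $p=1$, which gives $\Capa_1(A)\le C\hh^{d-1}(A)=0$. Compactness is not needed here.

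\textbf{The direction $\Capa_1(A)=0 \Rightarrow \hh^{d-1}(A)=0$.} The plan is to combine the coarea formula with a covering argument.

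First I reduce to smooth test functions. Fix $\e>0$ and choose $f\in K^1$ with $A\subset\{f\ge 1\}^\circ$ and $\int|Df|\,\di x<\e$. By truncation, $\min(f,1)$ is still admissible and has no larger energy. By mollification, and using compactness of $A$ so that a small neighbourhood of $A$ lies in $\{f\ge1\}^\circ$, I can assume $f$ is smooth with $0\le f\le 1$, $f=1$ near $A$, and $\int|Df|<2\e$. The mollified function equals 1 near $A$ because $f$ equals 1 on a neighbourhood of $A$.

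Next I apply the coarea formula:
\begin{equation*}
\int_0^1 \hh^{d-1}(\{f=t\})\,\di t \le \int_{\R^d}|Df|\,\di x < 2\e .
\end{equation*}
Hence there is some $t\in(0,1)$ with $\hh^{d-1}(\{f=t\})<2\e$. The set $U_t=\{f>t\}$ is open, contains $A$, and satisfies $\partial U_t\subset\{f=t\}$. Since $f\in L^{d/(d-1)}$, the Lebesgue measure of $U_t$ is finite. The isoperimetric inequality then gives $\mathcal L^d(U_t)\le C\e^{d/(d-1)}$, which is also consistent with Theorem \ref{basic_properties_capa}(v).

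\textbf{Main obstacle.} It remains to pass from ``$A$ is enclosed by a set with small perimeter'' to ``$\hh^{d-1}_\infty(A)$ is small''. Enclosure alone does not suffice in general, since a hypersurface sitting inside a thin tube is still an obstruction of this kind. What I would use is the boxing inequality (Gustin / Fleming): every bounded open set $U$ of finite perimeter can be covered by balls $B(x_i,r_i)$ with $\sum r_i^{d-1}\le C\,\mathrm{Per}(U)$. The proof goes as follows. For each $x\in U$, the density ratio $\mathcal L^d(U\cap B(x,r))/\mathcal L^d(B(x,r))$ equals 1 for small $r$ and tends to $0$ as $r\to\infty$. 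So for each $x$ there is a radius $r_x$ at which this ratio equals $1/2$. The relative isoperimetric inequality in $B(x,r_x)$ gives $r_x^{d-1}\le C\,\mathrm{Per}(U;B(x,r_x))$. Applying the Vitali $5r$-covering lemma to these balls yields the covering of $U$ by balls $B(x_i,5r_{x_i})$ with $\sum r_{x_i}^{d-1}\le C\,\mathrm{Per}(U)$.

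Applying this to $U_t$ gives $\hh^{d-1}_\infty(A)\le C\e$, and in fact $\hh^{d-1}_\delta(A)\le C\e$ as long as the radii are at most $\delta$. To get the radius bound, I use that each chosen ball satisfies $r^d\le C\mathcal L^d(U_t)\le C\e^{d/(d-1)}$, so every radius is $O(\e^{1/(d-1)})$, which tends to 0 with $\e$. Letting $\e\to0$ then gives $\hh^{d-1}(A)=0$.
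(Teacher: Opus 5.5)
Your proof is correct and is essentially the standard argument of the cited source (Evans--Gariepy, Theorem~5.12), which the paper quotes without proof: choose via the coarea formula a superlevel set $U_t\supset A$ of small perimeter, bound $\mathcal{L}^d(U_t)$ by the isoperimetric inequality, take radii at which the density is $1/2$, and conclude with the relative isoperimetric inequality, a Vitali covering and $\e\to 0$. The one step you leave implicit, $\mathrm{Per}(U_t)\le \hh^{d-1}(\{f=t\})$, holds for a.e.\ $t$ (for instance at regular values, by Sard), or can be avoided by using the BV coarea formula $\int|Df|\,\di x=\int_{-\infty}^{\infty}\mathrm{Per}(\{f>t\})\,\di t$ directly.
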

\smallskip

\begin{definition}[\cite{evans2025measure}, Definition 4.11]
    A function $f: \R^d \to \R$ is $p$-quasicontinuous if for each $\e>0$, there exists an open set $V$ such that 
    \begin{align*}
        \Capa_p(V) \leq \e
    \end{align*}
    and
    \begin{align*}
        f|_{\R^d \setminus V}\ \mathrm{is\ continuous.}
    \end{align*}
\end{definition}
\smallskip

\begin{theorem}[\cite{evans2025measure}, Theorem 4.19]\label{quasicontinuity}
    Suppose $f \in W^{1,p}(\R^d)$, $1 \leq p <d$. Then there exists a Borel set $E \subset \R^d$ such that $\Capa_p(E) =0$ and for all $x \in \R^d \setminus E$, it holds
    \begin{align*}
        \lim_{r \to 0}\frac{1}{|B_r(x)|} \int_{B_r(x)} f(y) \di y =: f^*(x)
    \end{align*}
    exists. Moreover $f^*$ is $p$-quasicontinuous.
\end{theorem}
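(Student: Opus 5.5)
The statement is Theorem \ref{quasicontinuity}: for $f \in W^{1,p}(\R^d)$ with $1 \le p < d$, the Lebesgue limits exist outside a set of $p$-capacity zero, and the precise representative is quasicontinuous. I would follow the standard route: approximate $f$ by continuous functions and control the exceptional sets by capacity. The needed capacitary estimates come from the outer measure property of $\Capa_p$ and from Theorem \ref{basic_properties_capa}.

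\textbf{Step 1: weak-type capacitary estimate for smooth functions.} For $g \in C^\infty_c(\R^d)\cap W^{1,p}$, $g\ge 0$, and $\lambda>0$, I would show
\[
\Capa_p(\{g>\lambda\}) \le \lambda^{-p}\int |Dg|^p \, \di x .
\]
The function $g/\lambda$ lies in $K^p$ by Sobolev, and $\{g>\lambda\}$ is open. A slight enlargement (using $(1+\delta)g/\lambda$) puts the set inside the interior of $\{\cdot\ge1\}$. For signed $g$, I would apply this to $|g|$, which is admissible since $D|g| = \pm Dg$ a.e.

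\textbf{Step 2: approximation.} Choose $f_k \in C^\infty_c$ with $\sum_k 2^{kp}\|D(f_{k+1}-f_k)\|_p^p <\infty$ and $f_k\to f$ in $W^{1,p}$. Set
\[
V_k=\{|f_{k+1}-f_k|>2^{-k}\}\quad\text{(open)}, \qquad W_j=\bigcup_{k\ge j}V_k .
\]
By Step 1 and countable subadditivity, $\Capa_p(W_j)\to0$. Outside $W_j$ the $f_k$ converge uniformly, so the limit $\tilde f$ is continuous on $\R^d\setminus W_j$. It equals $f$ a.e., since by Theorem \ref{basic_properties_capa}(v) we have $\mathcal L^d(W_j)\to0$.

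\textbf{Step 3: Lebesgue points.} This is the main obstacle: we must show that the averages of $f$ converge to $\tilde f(x)$ for quasi-every $x$. I would use the capacitary maximal estimate
\[
\Capa_p\Big(\{\textstyle\sup_r \fint_{B_r(x)}|h|>\lambda\}\Big)\le C\lambda^{-p}\|h\|_{W^{1,p}}^p ,
\]
which I would prove by a Vitali covering argument. Fix a ball $B_r(x)$ where $\fint_{B_r}|h|>\lambda$. By Poincaré–Sobolev on that ball, either $r^{p-d}\int_{B_{r}}|Dh|^p \gtrsim \lambda^p$ or the ball average is large. In either case $\Capa_p(B_{5r})\lesssim r^{d-p}$ by Theorem \ref{basic_properties_capa}(ii), and this is bounded by $\lambda^{-p}\int_{B_r}(|Dh|^p+\ldots)$. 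Summing over the disjoint subfamily gives the estimate.

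With the maximal estimate in hand, apply it to $h=f-f_k$ on $\R^d\setminus W_j$. Since $f_k$ is continuous, $\limsup_r \fint_{B_r(x)}|f-\tilde f(x)|$ is controlled by the maximal function of $f-f_k$ plus $|f_k(x)-\tilde f(x)|$. Both are small outside a set of small capacity. Intersecting over $j$ yields a Borel set $E$ (a $G_\delta$-type set) with $\Capa_p(E)=0$. Off $E$ the limit $f^*$ exists and equals $\tilde f$, so $f^*$ is quasicontinuous by Step 2.
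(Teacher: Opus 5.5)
\textbf{Where the proposal breaks: the capacitary maximal inequality in Step 3.} Your overall architecture matches the proof the paper attributes to Evans--Gariepy. That proof uses smooth approximants with rapidly decaying gradient error, exceptional sets defined through the maximal function of $f-f_k$ (the sets $B_n$ in the paper's summary), and a Borel--Cantelli argument based on subadditivity and outer regularity of $\Capa_p$. Steps 1 and 2, and the final combination in Step 3, are fine. The gap is the inequality $\Capa_p(\{Mh>\lambda\})\le C\lambda^{-p}\|h\|_{W^{1,p}}^p$, where $Mh(x)=\sup_{r>0}\frac{1}{|B_r(x)|}\int_{B_r(x)}|h|\,\di y$. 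This inequality carries the whole difficulty, and your Vitali argument does not prove it.

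Look at the case where the gradient term on $B_r(x)$ is small. There the hypothesis on the average only gives $\int_{B_r(x)}|h|^p\gtrsim\lambda^p r^d$. You must dominate $\Capa_p(B_{5r}(x))\approx r^{d-p}$, which is larger by a factor $r^{-p}$. For example, if $h\equiv 2\lambda$ near $x$ and $r\to 0$, the right-hand side of your ball-wise bound is $O(r^d)$.

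\textbf{Why re-choosing the radii does not help.} Take $R<1$, $s\ll R$, and $h(y)=2\lambda\min\{1,\max\{0,\log(R/|y|)/\log(R/s)\}\}$. Then $\{Mh>\lambda\}\supset B_{\sqrt{sR}}(0)$. On the annulus $|Dh|=2\lambda/(|y|\log(R/s))$, so for every ball
\[
\int_{B_\rho(x)}\bigl(|Dh|^p+|h|^p\bigr)\lesssim\lambda^p\bigl(\log(R/s)^{-p}\rho^{d-p}+\min\{\rho,R\}^d\bigr).
\]
This is $\ll\lambda^p\rho^{d-p}$ uniformly in $\rho$ once $R$ is small and $\log(R/s)$ is large. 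Hence no ball around a point of the superlevel set has its capacity paid for by the Sobolev energy it contains. The energy that pays is spread over many scales and shared by many points. Telescoping over dyadic scales only gives $\lambda\lesssim\sum_k\bigl(\rho_k^{p-d}\int_{B_{\rho_k}(x)}|Dh|^p\bigr)^{1/p}$, and no single term of that sum needs to be of size $\lambda$.

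\textbf{What you need instead.} This step requires a genuinely multi-scale tool. For $1<p<d$ you can use Kinnunen's theorem $|D(Mh)|\le M|Dh|$ a.e., together with the boundedness of $M$ on $L^p$ and $L^{p^*}$ and the lower semicontinuity of $Mh$. Then $Mh/\lambda$ is an admissible competitor in the definition of $\Capa_p$. For $p=1$ (the case the paper actually uses) this route fails, and you would have to argue through level sets, for instance via the coarea formula and a boxing-type inequality. Alternatively, invoke the capacity--maximal function estimate on which the cited Evans--Gariepy proof relies; it is exactly what makes $\Capa_p(E_k)\to0$ for the sets $B_n$.
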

\smallskip

In particular, the proof of \cite[Theorem 4.19]{evans2025measure} yields the following: Given a sequence of smooth functions $f_n$ that converge to $f$ in $W^{1,p}(\R^d)$ such that $\int |Df_n-Df|^p \, \di x \leq \frac{1}{2^{(p+1)n}}$, and define the sets $B_n \subset \R^d$ via 
\begin{align*}
    B_n:= \left\{ x \in \R^d \Bigg| \frac{1}{|B_r(x)|}\int_{B_r(x)} |f-f_n| \, \di y \geq \frac{1}{2^n}\ \mathrm{for \ some\ } r >0\right\}.
\end{align*}
Then, there exists a set $E \subset \R^d$ with $\Capa_p(E) = \mathcal{L}^d(E) =0$ such that all $x \in \R^d \setminus E$ are Lebesgue points. 
Now define the sets $E_k:= E\cup  \bigcup_{j \geq k} B_j$. Then the sequence $f_n$ converges uniformly on $\R^d\setminus E_k$ for all $k \in \N$ and $\lim_{k \to \infty} \Capa_p(E_k) =0$. 
\subsection{The case $d=2$, $p=1$.}\label{subsec:cap_bound}
In this section, we will study the $1$-capacity in $\R^2$.
\begin{lemma}\label{twodim_rotation_gradient}
   Let $L>0$, $b>0$. Let $f\in C^{0,1}([0, L], [0,b])$ with $f(0) =b$ and such that there exists an $s \in [0,L]$ such that $f(s) =0$. Then one has that 
    \begin{align*}
        \int_{0}^L |f'| \di t \geq b.
    \end{align*}
\end{lemma}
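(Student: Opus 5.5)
The plan is to reduce the statement to the fundamental theorem of calculus for Lipschitz functions. Since $f \in C^{0,1}([0,L],[0,b])$, it is absolutely continuous on $[0,L]$. By Rademacher's theorem, or simply by absolute continuity in one variable, $f$ is differentiable almost everywhere with $f' \in L^\infty([0,L])$. Moreover, for all $0 \le x \le y \le L$ we have
\begin{align*}
    f(y) - f(x) = \int_x^y f'(t)\, \di t .
\end{align*}

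Next, pick the point $s \in [0,L]$ with $f(s)=0$ that the hypothesis provides. Apply the identity above with $x=0$ and $y=s$, together with $f(0)=b$ and $f(s)=0$. This gives
\begin{align*}
    b = f(0) - f(s) = -\int_0^s f'(t)\, \di t \le \int_0^s |f'(t)|\, \di t \le \int_0^L |f'(t)|\, \di t ,
\end{align*}
where the last inequality holds because the integrand is nonnegative and $[0,s] \subset [0,L]$. This is exactly the claimed bound. In the degenerate case $s=0$ we would get $b=f(0)=0$, which contradicts $b>0$, so $s>0$ and the interval is nondegenerate. The argument is valid even without this observation.

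None of these steps is a real obstacle. The only point needing care is to justify the fundamental theorem of calculus in the Lipschitz (absolutely continuous) setting rather than the $C^1$ setting, and that is a standard fact. The hypothesis that $f$ takes values in $[0,b]$ is not used beyond $f(0)=b$ and $f(s)=0$. Presumably it matters later, when the lemma is applied to truncated capacity test functions restricted to segments.
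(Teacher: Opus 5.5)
Your proposal is correct and follows the same approach as the paper, which uses the one-line fundamental theorem of calculus estimate $b = f(0) = \int_0^s -f'\,\di t \le \int_0^L |f'|\,\di t$. Your justification of the fundamental theorem of calculus through the absolute continuity of Lipschitz functions is a small piece of rigor that the paper leaves implicit.
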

\begin{proof}
    This follows from the fundamental theorem of calculus as
    \begin{align*}
        b= f(0) = \int_0^s -f'\, \di t \leq \int_{0}^L |f'|\, \di t.
    \end{align*}
\end{proof}

\begin{lemma}\label{capacity_integral_estimate_one_line2d}
    Let $L >1$, $\lambda \in (0,L)$, $b>0$. Let $\gamma:[0,1] \to \R^2$ be a continuous curve such that $\gamma([0,1]) \subset B^{\R^2}_{4L}(0)$, $\gamma_0, \gamma_1 \in B^{\R^2}_L(0)$ and $|\gamma_0-\gamma_1|_{\R^2} \geq \lambda$. Now let $f \in C_c^{0,1}(\R^2, [0,1])$ such that $f \equiv b$ on $\gamma([0,1])$. Then
    \begin{align*}
        \int_{B_{4L}(0)} |f| + |Df| \di x \geq b\lambda.
    \end{align*}
\end{lemma}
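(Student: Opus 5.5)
The plan is to reduce to a one-dimensional estimate on lines perpendicular to the chord $\gamma_1-\gamma_0$ and then integrate with Fubini. First, using the rotation and translation invariance of the Lebesgue measure and of $|Df|$ (an affine isometry fixing $0$ suffices, so $B_{4L}(0)$ is preserved), rotate coordinates so that $e:=(\gamma_1-\gamma_0)/|\gamma_1-\gamma_0|$ is the first coordinate direction. Write $x=(t,s)$ in these coordinates and let $\pi(x)=t$. Since $t\mapsto \pi(\gamma_t)$ is continuous, the intermediate value theorem shows that its image contains the interval $I$ between $\pi(\gamma_0)$ and $\pi(\gamma_1)$. This interval has length $|\gamma_1-\gamma_0|\geq\lambda$. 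Moreover $I\subset(-L,L)$, since $\gamma_0,\gamma_1\in B_L(0)$.

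Fix $t\in I$ and let $\ell_t=\{(t,s): s\in\R\}$. By the above, there is a point $p_t=(t,s_t)\in\gamma([0,1])\cap\ell_t$, so $f(p_t)=b$ and $p_t\in B_{4L}(0)$. The chord $\ell_t\cap B_{4L}(0)$ is the segment $|s|<\sqrt{16L^2-t^2}$. Its half-length exceeds $\sqrt{15}L>1$ because $|t|<L$ and $L>1$. Hence on at least one side of $p_t$ the chord contains a segment $S_t$ of length $1$ starting at $p_t$.

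On $S_t$, which is parametrised by arclength $r\in[0,1]$, set $A_t:=\int_{S_t}|Df|\,\di r$. Since $f$ is Lipschitz, the fundamental theorem of calculus along the segment (the same argument as in Lemma \ref{twodim_rotation_gradient}) gives $f(p_t+re_2)\geq b-A_t$ for all $r\in[0,1]$. Together with $f\geq0$ this yields
\begin{align*}
\int_{S_t}|f|\,\di r+\int_{S_t}|Df|\,\di r\;\geq\;\max\{0,b-A_t\}+A_t\;\geq\; b .
\end{align*}
Consequently $\int_{\ell_t\cap B_{4L}(0)}(|f|+|Df|)\,\di s\geq b$ for every $t\in I$. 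Here we use $|\partial_s f|\leq|Df|$ a.e. on a.e. line, which holds since $f$ is Lipschitz. Integrating over $t\in I$ via Fubini gives
\begin{align*}
\int_{B_{4L}(0)}|f|+|Df|\,\di x\;\geq\;\int_I b\,\di t\;\geq\; b\lambda .
\end{align*}

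The main point requiring care is the geometric step: the segment $S_t$ of length $1$ must stay inside $B_{4L}(0)$. This is exactly where the hypotheses $L>1$ and $\gamma_0,\gamma_1\in B_L(0)$ (giving $|t|<L$) enter. A second subtlety is that the $|f|$ term is genuinely needed, since $f$ may not vanish inside the ball. This is handled by the $\max\{0,b-A_t\}$ dichotomy. Measurability in $t$ poses no problem, because we integrate over the whole chord rather than over the chosen segments $S_t$.
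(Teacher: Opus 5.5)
Your proof is correct and follows essentially the same route as the paper. You rotate so the chord $\gamma_1-\gamma_0$ is horizontal, then use the intermediate value theorem to find, on every vertical line over an interval of length at least $\lambda$, a point of $\gamma$ where $f=b$. From there you bound $\int(|f|+|Df|)$ by $b$ on a segment of that line via the fundamental theorem of calculus and integrate with Fubini. The differences are cosmetic: you use a segment of length $1$ instead of $L$, and the bound $\max\{0,b-A_t\}$ instead of the paper's minimum $\sigma$ of $f$ on the segment. Your explicit remark that $|\partial_s f|\le |Df|$ holds on almost every line makes the slicing step a bit more careful than in the paper.
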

\begin{proof}
     By potentially applying an orthogonal linear transformation, we may assume that $\gamma_{1}-\gamma_{0} \in \mathrm{span}(e_1)$. 
    Denote by $P_1: \R^2 \to \R$ the projection on the first coordinate. 
    By Fubini's theorem, we have that 
    \begin{align}\label{first_fubini2d}
        \int_{B_{4L}(0)} |f| + |Df| \di x \geq \int_{[-L, L]} \int_{\{y\}\times [-\sqrt{16L^2-y^2},\sqrt{16L^2-y^2}]} |f| + |Df|\,\di \mathcal{L}^{1}(x) \di \mathcal{L}^1(y).
    \end{align}
    Denote $y_1:= \min(P_1(\gamma_0), P_1(\gamma_1))$.
    Now, by the intermediate value theorem and the choice of $f$, we have that for each $y \in [y_1, y_1 + \lambda] \subset [-L, L]$ there exists a $x_y \in \big[-\sqrt{16L^2-y^2},\sqrt{16L^2-y^2}\big]$ such $(y, x_y) = \gamma_t$ for some $t \in [0,1]$ and hence $f((y, x_y)) =b$. 
    Now fix a $y \in [y_1, y_1 + \lambda]$. As $|y|\leq L$, we get that there exists a $v\in \{-1, +1\}$ such that $\{ x_y\}+[0, L]v \subset [-\sqrt{16L^2-y^2},\sqrt{16L^2-y^2}]$. 
    Then, 
    \begin{align}\label{2dslice_integral}
        \int_{\{y\}\times [-\sqrt{16L^2-y^2},\sqrt{16L^2-y^2}]} |f| + |Df|\, \di x \geq \int_{\{y\}\times (\{ x_y\}+[0, L]v)} |f| + |Df|\,\di x.
    \end{align}
    The last integral can be rewritten as
    \begin{align*}
       \int_0^L (|f| + |Df|)((y,x_y)+ \rho (0,v))\, \di \rho.
    \end{align*}
    Denote $f_v\colon [0,L] \to [0,b],\ \rho \mapsto f((y,x_y)+ \rho (0,v))$. Denote $\sigma := \min_{\rho \in [0,L]} f_v(\rho)$. 
    Then, we have that 
    \begin{align}\label{function_integral_on_line2d}
        \int_0^L  |f| ((y,x_y)+ \rho (0,v))\, \di \rho = \int_0^L|f_v| \di \rho \geq L\sigma \geq \sigma.
    \end{align}
    On the other hand, Lemma \ref{twodim_rotation_gradient} applied to $\max(f_v-\sigma, 0)$ yields that  
    \begin{align}\label{gradient_integral_on_line2d}
        \int_0^L |Df| ((y,x_y)+ \rho (0,v))\, \di \rho = \int_0^L |f_v'|\, \di \rho \geq b-\sigma.
    \end{align}
    Summing up \eqref{function_integral_on_line2d} and \eqref{gradient_integral_on_line2d} gives
    \begin{align}
        \int_0^L  (|f| + |Df|)((y,x_y)+ \rho (0,v))\, \di \rho \geq b. 
    \end{align}
    Combining this with \eqref{first_fubini2d} and \eqref{2dslice_integral}, we get that 
    \begin{align*}
        \int_{B_{4L}(0)} |f| + |Df| \di x \geq b\lambda.
    \end{align*}
    This proves the lemma. 
\end{proof}

Let $A \subset \R$ and $L>1$. We say that a map $F:A \to \R^d$ is $(\frac{1}{L}, L)$-bi-Lipschitz if for each $x, y \in A$ it holds $\frac{1}{L}|x-y| \leq |F(x)-F(y)| \leq L|x-y|$.
\smallskip

\begin{lemma}\label{2dcapacity_of_line_segments} 
    Let $L >1$. Let $K \subset [0,1]$ be the finite union of pairwise disjoint compact intervals $I_1, \ldots, I_n$ and let $\gamma:K \to B^{\R^2}_L(0)$ be a $(\frac{1}{L}, L)$-bi-Lipschitz continuous map. Now let $f \in C_c^{0,1}(\R^2, [0,1])$ such that $f \equiv 1$ on $\gamma(K)$. Then it holds 
    \begin{align*}
        \int_{B_{4L}(0)}|f|+|Df| \di x \geq \frac{\mathcal{L}^1(K)}{L}.
    \end{align*}
\end{lemma}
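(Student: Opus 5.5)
The plan is to reduce to Lemma \ref{capacity_integral_estimate_one_line2d}, applied once per interval, and to control the overlap between intervals. Write $I_j=[a_j,b_j]$ and $\ell_j:=b_j-a_j$, so that $\mathcal{L}^1(K)=\sum_j \ell_j$. The restriction $\gamma|_{I_j}$ is a continuous curve with image in $B_L(0)\subset B_{4L}(0)$, and by the lower bi-Lipschitz bound its endpoints satisfy $|\gamma(b_j)-\gamma(a_j)|\geq \ell_j/L$. Applying Lemma \ref{capacity_integral_estimate_one_line2d} with $b=1$ and $\lambda=\ell_j/L$ gives, for each $j$,
\begin{align*}
\int_{B_{4L}(0)}|f|+|Df|\,\di x\ \geq\ \frac{\ell_j}{L}.
\end{align*}
The difficulty is that these $n$ estimates come from the same integral, so they cannot simply be added. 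To add them, the contributions of different intervals must come from disjoint regions of the integration domain. The integration domain here means the product of the slicing parameter $y$ with the slice segments of length $L$ used in the proof of Lemma \ref{capacity_integral_estimate_one_line2d}.

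Looking more closely at that proof, the slicing yields a bound on the measure of a set of slicing parameters: for a direction $\theta$, the integral is at least $\mathcal{L}^1(P_\theta(\gamma(K)))$. Here $P_\theta$ is the orthogonal projection onto the line spanned by $e_\theta$, and the slices are the orthogonal lines. On each such line through a point of $\gamma(K)$ one takes a half-segment of length $L$ inside $B_{2L}(0)$, and on it the quantity $\sigma+(1-\sigma)$ from the proof is at least $1$. So it suffices to show that the projection of $\gamma(K)$ onto some direction has length at least $\mathcal{L}^1(K)/L$, or that the integral can be split so that each piece $\gamma(I_j)$ contributes on its own set of slices. One candidate is to choose the direction $e_1$ parallel to $\gamma(b_{j})-\gamma(a_{j})$ for a suitable $j$, for example the longest one, and to use the intermediate value argument for each $j$ simultaneously.

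The main obstacle is exactly this step: the projections $P_\theta\gamma(I_j)$ may overlap, for instance when the pieces are nearly parallel and stacked. A single slice meeting several pieces then only gives the bound $1$, since $f$ may remain equal to $1$ between the pieces. In that situation the bi-Lipschitz condition is what has to help. Distinct pieces are separated by at least $1/L$ times their parameter distance, so stacked pieces force $\gamma(K)$ to spread out in the transversal direction. I would therefore first try to choose the slicing direction adapted to this spread, or to average the slice estimate over $\theta\in[0,\pi)$. The average should bound the integral below by a Crofton-type quantity comparable to $\hh^1(\gamma(K))\geq \mathcal{L}^1(K)/L$. I expect that the constant in such an averaged bound, and in particular whether it is exactly $1/L$ or only comparable to it, is where care is needed. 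If averaging only gives a constant multiple, I would still aim for $\mathcal{L}^1(K)/L$ up to a universal factor, which is what a uniform lower capacity bound needs.
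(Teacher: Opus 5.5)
Your slice bound is correct: for every direction $\theta$ one gets $\int_{B_{4L}(0)}|f|+|Df|\,\di x\ge\mathcal{L}^1(P_\theta(\gamma(K)))$. The whole proof then rests on showing that some projection of the disconnected set $\gamma(K)$, or the average of its projections, has length at least $\mathcal{L}^1(K)/L$. That is exactly the step you leave open, so as written your argument only yields $\max_j \ell_j/L$.

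The averaging route cannot close this gap. The averaged slice bound is the Favard length, which counts lines meeting $\gamma(K)$ without multiplicity. Crofton's formula therefore gives $\frac{1}{\pi}\int_0^\pi\mathcal{L}^1(P_\theta(\gamma(K)))\,\di\theta\le\frac{2}{\pi}\mathcal{H}^1(\gamma(K))$, which is an upper bound, not a lower one. Already for $n=1$ with $\gamma$ an isometry onto a segment, averaging gives only $\frac{2}{\pi}\mathcal{L}^1(K)$. That is below $\mathcal{L}^1(K)/L$ whenever $L<\pi/2$, so you cannot reach the stated constant this way. Bounding Favard length below by length is a genuine quantitative-rectifiability statement. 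It fails for general finite unions of segments, for example segments placed in the squares of high generations of the four-corner Cantor set. So you would need to use the bi-Lipschitz structure in an essential quantitative way, which the proposal does not do. Choosing a single good direction instead is just the overlap problem you describe, still unresolved.

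The missing idea is to stop projecting $\gamma(K)$ and to use the connectivity of the superlevel sets of $f$, with induction on $n$. Let $a$ be the supremum of the levels $s$ for which $\gamma(I_1)$ and $\gamma(I_n)$ lie in the same connected component of $\{f>s\}\cap\overline{B_{4L}(0)}$.

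Below level $a$ there is a \emph{single} continuous curve from $\gamma(\min I_1)$ to $\gamma(\max I_n)$ inside $\{f>a-\e\}$. By the lower bi-Lipschitz bound its chord has length at least $\mathcal{L}^1(K)/L$. Lemma \ref{capacity_integral_estimate_one_line2d}, applied to $\min(f,a-\e)$, then gives $(a-\e)\mathcal{L}^1(K)/L$. There is no overlap issue here, because the intermediate value theorem makes the projection of a connected curve onto its chord direction a full interval.

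Above level $a$, the intervals split into at least two clusters $J_k$ lying in distinct components of $\{f>a+\e\}$. Localising $(f-a-2\e)_+$ to each component gives disjointly supported functions, each involving fewer intervals. Induction then yields $(1-a-2\e)\mathcal{L}^1(J_k)/L$ for each cluster. These functions and $\min(f,a-\e)$ have disjoint gradient supports and sum to at most $f$, so the bounds add up to $(1-3\e)\mathcal{L}^1(K)/L$.

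This handles exactly your stacked-pieces scenario. If $f$ stays close to $1$ between the pieces, then $a$ is close to $1$, and the estimate comes from the connecting path in the superlevel set rather than from $\gamma(K)$ itself.
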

\begin{proof}
    We proceed by induction. \\
    \textbf{Step 1.} $n=1$. \\
    We have that $\gamma(K) = \gamma(I_1) \subset B_L(0)$ and $|\gamma_{\min I_1}-\gamma_{\max I_1}|_{\R^2} \geq \frac{\mathcal{L}^1(I_1)}{L}$. Then $\gamma$ is a curve and the case $n=1$ now follows from Lemma \ref{capacity_integral_estimate_one_line2d}, after reparametrising $\gamma$ and setting $b=1$. \\
    \textbf{Step 2.} $n>1$. \\
    Assume the property holds true for all $m \leq n-1$. Define 
    \begin{align*}
        a:= \sup\{s \in [0,1]: \gamma(I_1),\gamma(I_n)\ \mathrm{are\ in\ the\ same \ connected\ component\ of\ } f^{-1}((s, \infty)) \cap \overline{B_{4L}(0)}\}.
    \end{align*}
    In the following all statements about openness and compactness will be made with respect to the Euclidean subspace topology on $\overline{B_{4L}(0)}$.
    If $a>0$, pick an $\e \in (0, \min(a, 1-a)/4)$.
    Then $\gamma(I_1)$ and $\gamma(I_n)$ are path connected in $f^{-1}(a-\e, \infty)\cap \overline{B_{4L}(0)}$. Hence, there exists a continuous curve $\Tilde{\gamma}:[0,1] \to f^{-1}((a-\e, \infty))\cap \overline{B_{4L}(0)}$ such that $\Tilde{\gamma}_0 = \gamma_{\min {I_1}}$ and $\Tilde{\gamma}_1 = \gamma_{\max {I_n}}$. Note that $\gamma_{\min {I_1}}, \gamma_{\max {I_n}} \in B_L(0)$ and 
    \begin{align}
        |\gamma_{\min {I_1}}-\gamma_{\max I_n}|_{\R^2} \geq \frac{1}{L}\mathcal{L}^1(K).
    \end{align}
    Then Lemma \ref{capacity_integral_estimate_one_line2d} shows that for $\Tilde{f}:= \min(f, a-\e)$, it holds
    \begin{align}
        \int_{B_{4L}(0)}|\Tilde{f}|+|D\Tilde{f}| \di x \geq \frac{(a-\e)\mathcal{L}^1(K)}{L}. 
    \end{align}
    If $a=0$, let $\e \in (0, \frac{1}{4})$ we set $\Tilde{f} \equiv 0$, so in total, we get 
    \begin{align}
        \int_{B_{4L}(0)}|\Tilde{f}|+|D\Tilde{f}| \di x \geq \frac{\max(0,a-\e)\mathcal{L}^1(K)}{L}. 
    \end{align}
    Now consider $\check{f}:= \max(f, a+2\e)-a-2\e$. Then $\supp\, D\Tilde{f} \cap \supp\, D \check{f} = \emptyset$. Now there exists $j \in \{2, \ldots, n\}$, pairwise disjoint nonempty sets $B_1, \ldots, B_j \subset \{1, \ldots, n\}$, and pairwise disjoint open sets $O_1, \ldots, O_j \subset f^{-1}((a+\e, \infty)) \cap \overline{B_{4L}(0)}$ such that for each $k$, $O_k$ is a connected component of $f^{-1}((a+\e, \infty))$ and
    \begin{align}
        \bigcup_{i \in B_k} \gamma(I_i) \subset O_k.
    \end{align}
    Fix a $k \in \{1, \ldots, j\}$. Define
    \begin{align}
        \check{f}_k:= \check{f}\mathbbm{1}_{O_k}. 
    \end{align}
    Note that 
    \begin{align*}
        \check{f}_k \in C^{0,1}_c(B_{4L}(0)),
    \end{align*}
    as $f^{-1}([a+ \frac{3\e}{2}, \infty)) \cap \overline{B_{4L}(0)}$ is compactly contained in $f^{-1}((a+\e, \infty))\cap O_k \cap \overline{B_{4L}(0)}$, so for each $k$ there exists a function $ \eta_k\in  C_c^\infty(O_k\cap\overline{B_{4L}(0))}, [0,1])$ such that $\eta_k \equiv 1$ on $f^{-1}([a+ \frac{3\e}{2}, \infty)) \cap \overline{B_{4L}(0)}$, hence $\check{f}_k = \eta_k \check{f}$.
    
    Denote $J_k:= \bigcup_{i \in B_k} I_i \subset [0,1]$ and $\gamma_k:= \gamma|_{J_k}$. Then $\gamma_{k}$ is $(\frac{1}{L}, L)$-bi-Lipschitz. 
    Now as $|B_k|< n$, rescaling the induction hypothesis yields that 
    \begin{align}
        \int_{B_{4L}(0)}|\check{f}_k|+|D\check{f}_k| \di x \geq \frac{\mathcal{L}^1(J_k)}{L} (1-a-2\e). 
    \end{align}
    Using that $f \geq \check{f}+ \Tilde{f}$, as well as $\check{f}, \Tilde{f}\geq 0$, and for $k \neq k'$, it holds $\supp\, \check{f}_{k'} \cap \supp \check{f}_k =\emptyset$, we get that 
    \begin{align*}
         \int_{B_{4L}(0)}|f|+|Df| \di x &\geq  \int_{B_{4L}(0)}|\Tilde{f}|+|D\Tilde{f}| \di x + \sum_{k=1}^j \int_{B_{4L}(0)}|\check{f}_k|+|D\check{f}_k| \di x \\
         & \geq \frac{(a-\e)\mathcal{L}^1(K)}{L} + \sum_{k=1}^j \frac{\mathcal{L}^1(J_k)}{L} (1-a-2\e) \geq \frac{(1-3\e)\mathcal{L}^1(K)}{L}.
    \end{align*}
    As $\e$ was arbitrary, the lemma follows. 
\end{proof}
Roughly speaking, Lemma \ref{2dcapacity_of_line_segments} establishes a uniform $1$-capacity bound for this particular family of sets. This is summarised in the next Corollary.
\smallskip

\begin{corollary}
    Given $L>1$, $\alpha\in [0,1]$, then there exists a constant $C=C(L, \alpha)>0$ such that the following holds: For any Borel set $A \subset [0,1]$ with $\mathcal{L}^1(A) \geq \alpha$ and any $(\frac{1}{L}, L)$-bi-Lipschitz map $\gamma:[0,1]\to \R^2$, it holds that $\Capa_1(\gamma(A)) \geq C$.
\end{corollary}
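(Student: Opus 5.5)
The plan is to reduce to Lemma \ref{2dcapacity_of_line_segments}. The first step is to normalize, then pass from an arbitrary competitor in the definition of $\Capa_1$ to a function to which that lemma applies. Finally, the zero-order term $\int|f|$ on the left-hand side of the lemma has to be absorbed by the gradient term via the Sobolev inequality. We assume $\alpha>0$, since for $\alpha=0$ the statement is trivial with any $C$ only if read as $C\ge 0$.

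\textbf{Normalization.} By Theorem \ref{basic_properties_capa}(iii) we may translate so that $\gamma(0)=0$. Then $|\gamma(t)|\le Lt\le L$, so $\gamma([0,1])\subset B_{2L}(0)$. A $(\frac1L,L)$-bi-Lipschitz map is in particular $(\frac{1}{2L},2L)$-bi-Lipschitz, so we may apply Lemma \ref{2dcapacity_of_line_segments} with $L' = 2L$ in place of $L$.

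\textbf{Reduction to a Lipschitz competitor.} Take $f\in K^1$ with $\gamma(A)\subset U:=\{f\ge1\}^\circ$. Replacing $f$ by $\min(f,1)$ keeps $f$ admissible and does not increase $\int|Df|$, so we may assume $0\le f\le 1$. The set $\gamma^{-1}(U)$ is relatively open in $[0,1]$ and contains $A$, so it has measure at least $\alpha$. Writing it as a countable union of intervals, we choose finitely many pairwise disjoint compact intervals $I_1,\dots,I_n$ inside it, with $K=\bigcup I_i$ and $\mathcal L^1(K)\ge \alpha/2$. Since $\gamma(K)$ is a compact subset of the open set $U$, on which $f=1$ a.e., the mollifications $f_\delta=f*\rho_\delta$ satisfy $f_\delta\equiv1$ on $\gamma(K)$ for small $\delta$. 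They are smooth with values in $[0,1]$. Multiplying by a cutoff $\chi\in C^\infty_c$ with $\chi\equiv1$ on $B_{8L}(0)$ gives a function in $C^{0,1}_c(\R^2,[0,1])$ that agrees with $f_\delta$ on $B_{8L}(0)$. Lemma \ref{2dcapacity_of_line_segments} with constant $2L$ then gives
\begin{align*}
\int_{B_{8L}(0)}|f_\delta|+|Df_\delta|\,\di x\ \ge\ \frac{\alpha}{4L}.
\end{align*}
Since $f\in L^2$ and $Df\in L^1$, we have $f_\delta\to f$ in $L^1_{\rm loc}$ and $\int_{B_{8L}}|Df_\delta|\le \int_{B_{8L+1}}|Df|$ for small $\delta$. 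Letting $\delta\to0$ yields $\int_{B_{8L}}|f|\,\di x+\int_{\R^2}|Df|\,\di x\ge \frac{\alpha}{4L}$.

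\textbf{Absorbing the zero-order term.} This is the step I expect to be the main obstacle, because Lemma \ref{2dcapacity_of_line_segments} controls $|f|+|Df|$ rather than $|Df|$ alone. I would use Hölder together with the Gagliardo--Nirenberg--Sobolev inequality for $d=2$, $p=1$, which is valid for $f\in K^1$ (that is, $f\in L^2$ with $Df\in L^1$):
\begin{align*}
\int_{B_{8L}}|f|\,\di x\le |B_{8L}|^{1/2}\|f\|_{L^2}\le C_0 L\int_{\R^2}|Df|\,\di x .
\end{align*}
Combining the two estimates gives $(1+C_0L)\int|Df|\ge \frac{\alpha}{4L}$. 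Taking the infimum over admissible $f$ gives
\begin{align*}
\Capa_1(\gamma(A))\ge C(L,\alpha):=\frac{\alpha}{4L(1+C_0L)}>0 .
\end{align*}
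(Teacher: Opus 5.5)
Your proposal is correct and follows essentially the same route as the paper. Both arguments extract finitely many disjoint compact intervals of total measure at least $\alpha/2$ inside $\gamma^{-1}(\{f\ge 1\}^\circ)$, mollify the competitor so that it is identically $1$ on $\gamma(K)$, apply Lemma \ref{2dcapacity_of_line_segments}, and absorb the $L^1$ term using H\"older and the Sobolev inequality $\|f\|_{L^2}\le c\,\|Df\|_{L^1}$.

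Your deviations are cosmetic, though slightly more careful than the paper's version:
\begin{itemize}
\item You replace $L$ by $2L$ so that the image lies in an open ball, as the lemma's hypothesis requires.
\item You work with $f\in K^1$ rather than $W^{1,1}$.
\item You let $\delta\to0$ before applying Sobolev, instead of the paper's case split.
\end{itemize}
Your caveat that $\alpha>0$ is needed is also implicit in the paper, whose constant is proportional to $\alpha$.
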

\begin{proof}
    Fix $\e>0$. Let $\Tilde{f} \in W^{1,1}(\R^2)$ such that $\gamma(A) \subset \{\Tilde{f}=1\}^\circ$ and such that 
    \begin{align*}
        \Capa_1(\gamma(A)) \geq \int_{\R^2} |D\Tilde{f}| \, \di x -\e.
    \end{align*}
    Define the open set $U:= \{\Tilde{f}=1\}^\circ \subset \R^2$. Then there exists an open set $O \subset [0,1]$ such that $A \subset O$ and $\gamma(O) \subset U$. It follows that $\mathcal{L}^1(O) \geq \alpha$. Now we can find a finite union of pairwise disjoint compact intervals $I_1, \ldots, I_n \subset [0,1]$ such that $K:= \bigcup_{i=1}^n I_i \subset O$ and 
    \begin{align*}
        \mathcal{L}^1\Big(\bigcup_{i=1}^n I_i\Big)=\sum_{i=1}^n \mathcal{L}^1(I_i) \geq \frac{\alpha}{2}.
    \end{align*}
    Via regularisation, we can find a function ${f}\in C_c^\infty(\R^2)$ that satisfies ${f} \equiv1$ on $\gamma(K)$ and 
    \begin{align*}
        \int |D\Tilde{f}-Df|\, \di x \leq \e. 
    \end{align*}
    Lemma \ref{2dcapacity_of_line_segments} now yields that 
    \begin{align}\label{cor_sobolev_estimate_capa}
        \int_{B_{4L}(\gamma_0)} |f|+|Df| \, \di x \geq \frac{\alpha}{2L}.
    \end{align}
    Now, we can distinguish two cases:\\
    \textbf{Case 1.}
         \begin{align*}
        \int_{B_{4L}(\gamma_0)} |Df| \, \di x \geq \frac{\alpha}{4L}.
    \end{align*}
    \textbf{Case 2.}
    \begin{align*}
        \int_{B_{4L}(\gamma_0)} |Df| \, \di x < \frac{\alpha}{4L}.
    \end{align*}
    Then, \eqref{cor_sobolev_estimate_capa} implies that 
    \begin{align*}
        \int_{B_{4L}(\gamma_0)} |f| \, \di x \geq \frac{\alpha}{4L}.
    \end{align*}
    Using the Cauchy-Schwartz-inequality, this implies
    \begin{align*}
        \norm{f}_{L^2(B_{4L}(\gamma_0))} \geq \frac{\norm{f}_{L^1(B_{4L}(\gamma_0))}}{\sqrt{|B_{4L}(0)|}} \geq \frac{\alpha}{30L^2}.
    \end{align*}
    By the Sobolev embedding theorem, we get that there exists a constant $c>0$ such that 
    \begin{align*}
        \int_{\R^2} |Df| \, \di x \geq c \Big(\int |f|^2\, \di x\Big)^{\frac{1}{2}} \geq \frac{c\alpha}{30L^2}.
    \end{align*}
    As either Case 1 or Case 2 must be true, we get that 
    \begin{align*}
        \Capa_1(\gamma(A)) \geq \min\Big(\frac{c\alpha}{30L^2}, \frac{\alpha}{4L} \Big)-2\e.
    \end{align*}
    The corollary now follows as $\e$ can be chosen arbitrarily small.
\end{proof}

\section{Application to manifolds with non-continuous Riemannian metrics}\label{sec:application}
\subsection{First order calculus on metric measure spaces}\label{sec:2}
In this section we will recall notions from first order calculus on metric measure spaces.
We will briefly summarise some tools to define a generalised notion of modulus of gradients and Sobolev functions in metric measure spaces that was introduced by Cheeger in \cite{cheeger1999differentiability} and further analysed by Ambrosio, Gigli and Savaré  \cite{ambrosio2014inventio}. The structure of this section has been inspired by \cite{mondino2025equivalence}.

Let $(X, \sfd)$ be a complete and separable metric space. 
We endow $(X,\sfd)$ with a non-negative $\sigma$-finite Borel  measure $\mathfrak{m}$, obtaining the metric measure space $(X, \sfd, \mathfrak{m})$.
Throughout the rest of this work, we assume that there exists a bounded Borel Lipschitz map $V: X \to [0, \infty)$ such that  (cf. \cite[Ch.\,4]{ambrosio2014inventio})
\begin{equation}\label{condition_on_measure_bounded_compression}
\begin{split}
    V\ \mathrm{is\ bounded\ on\ each\ compact\ set\ } K \subset X \ \mathrm{and}  \\
    \int_X e^{-V^2} \, \di\mathfrak{m} \leq 1. 
    \end{split}
\end{equation}
\begin{definition}
    Let $\gamma \in C([0,1],X)$ be a curve. Then we say that $\gamma$ is absolutely continuous if there exists a function $g \in L^1([0,1], [0, \infty))$ such that for all $0 \leq s<t \leq 1$, it holds
    \begin{align*}
        \sfd(\gamma_s, \gamma_t) \leq \int_s^t g(r) \, \di r.
    \end{align*}
    There exists a minimal such function $g$, which we call the metric speed of $\gamma$ and denote it by $|\dot{\gamma}_t|$. 
\end{definition}
The metric speed of an absolutely continuous curve $\gamma:[0,1] \to X$ is given by  $|\dot{\gamma}_t|:= \lim_{h \to 0}\frac{\sfd(\gamma_{t+h}, \gamma_t)}{|h|}$ {which exists for almost every $t \in [0,1]$} \cite{ambrosio1990metric}, (see also \cite{kirchheim1994rectifiable}).

We next recall the notion of test plan and weak upper gradient. We will use the conventions of  \cite{gigli2018nonsmooth}, after \cite{ambrosio2014inventio}. For any metric space $(Y, \sfd_Y)$, we denote by $\mathcal{P}(Y)$ the set of Borel probability measures on $Y$. Given two topological spaces $X_1, X_2$, a Borel-measurable map $T:X_1 \to X_2$, and a Borel measure $\mu$ on $X_1$, the pushforward measure $T_\#\mu$ on $X_2$ is the Borel measure defined by $T_\#\mu(B) = \mu (T^{-1}(B))$ for any Borel set $B \subset X_2$.
\smallskip

\begin{definition}\label{test_plan}
    Let $\boldsymbol{\pi} \in \mathcal{P}(C([0, 1], X))$. We say that $\boldsymbol{\pi}$ is a \emph{test plan} if there exists a constant $C(\boldsymbol{\pi})>0$ such that 
    \begin{align*}
        (e_t)_\#\boldsymbol{\pi} \leq C(\boldsymbol{\pi})\mathfrak{m}\ \mathrm{for \ all \ } t \in [0,1]
    \end{align*}
    and 
    \begin{align*}
        \int \int_0^1 |\dot{\gamma}_t|^2\,\di t\di\boldsymbol{\pi}(\gamma) < \infty. 
    \end{align*}
    We use the convention that if $\gamma$ is not absolutely continuous, then $\int_0^1 |\dot{\gamma}_t|^2\,\di t =\infty$.
\end{definition} 
In Question \ref{q2}, ``almost every absolutely continuous curve" refers to a set of curves $\Gamma \subset C([0,1], M)$ such that for every test plan $\boldsymbol{\pi}$ as defined above, $\boldsymbol{\pi}(\Gamma)=1$. 
\smallskip

\begin{definition}
   Given { $f: X \to \R$ } a $\mathfrak{m}$-measurable function, then a $\mathfrak{m}$-measurable function $G: X \to [0, \infty]$ is called a weak upper gradient of $f$, if 
   \begin{align}\label{weakgradient}
       \int|f(\gamma_1)-f(\gamma_0)|\,\di\boldsymbol{\pi}(\gamma)\,  \leq \int\int_0^1G(\gamma_t)|\dot{\gamma}_t|\, \di t\di\boldsymbol{\pi}(\gamma) < \infty, \quad \mathrm{for \ all \ test \ plans } \ \boldsymbol{\pi}. 
   \end{align}
\end{definition}
The discussion in \cite{ambrosio2014inventio}, Prop. 5.9 and Def. 5.11 shows the existence of a weak upper gradient $|Df|_w$ such that $|Df|_w \leq G$ {$\mathfrak{m}$-a.e.} for all other weak upper gradients $G$. We will call it the \textit{minimal weak upper gradient} of $f$.
\begin{definition}
    The Cheeger energy is defined in the class of $\mathfrak{m}$-measurable functions by
    \begin{align*}
        \Ch(f):= \left\{ \begin{array}{ll}
           \frac{1}{2}\int_X |Df|^2_w\, \di\mathfrak{m}  & \mathrm{if}\ f\ \mathrm{has\ a\ weak\ upper\ gradient\ in }\ L^2(X, \mathfrak{m}),  \\
           \infty  & \mathrm{otherwise},
        \end{array}\right.
    \end{align*}
    with proper domain $D(\Ch)= \{f:X \to [0, \infty], \mathfrak{m}\mathrm{-measurable}, \ \Ch(f) < \infty\}$.
\end{definition}
\begin{definition}
    A metric measure space is called \emph{infinitesimally Hilbertian} if  $\Ch$ is a quadratic form.
\end{definition}

\subsection{The distance on manifolds with non-continuous Riemannian metrics}\label{sec:3}
In this subsection we discuss the definition and basic properties of the Riemannian distance on manifolds with non-continuous Riemannian metrics. This summary is inspired by \cite[Section 3]{ryborz2026infinitesimal}.

From now on we will consider a smooth $d$-dimensional manifold $M$ with a Riemannian metric $g$ such that $g, g^{-1} \in L^\infty_{\rm loc}$.
Let $U \subset M$ be open such that $U$ lies in one coordinate patch and let $\psi: U \to \R^d$ be a local trivialisation. Then, $g, g^{-1} \in L^\infty_{\rm loc}(M)$ means that for every such $U$, the components of $\psi_*g, \psi_*g^{-1}$ are equivalence classes of locally $\mathcal{L}^d$-measurable functions up to $\mathcal{L}^d$-almost everywhere equality in $\psi(U)$ and that for every compact set $K \subset \psi(U)$, there exist constants $\lambda, \Lambda >0$ such that $\lambda \mathrm{Id} \leq \psi_*g \leq \Lambda \mathrm{Id}$ $\mathcal{L}^d$-almost everywhere in $K$. Note that under these assumptions the measures $\mathcal{L}^d$ and $\vol_g$ are locally equivalent and therefore have the same null sets.

Then $(M, g)$ falls into the class of LIP-Riemannian manifolds (see for example \cite{teleman1983index, de1991integral}). They have been studied by Norris \cite{norris1997heat}, and De Cecco and Palmieri  \cite{de1991integral}, as well as  Saloff-Coste  \cite{saloff1992uniformly} and Sturm  \cite{sturm1995analysis} with emphasis on potential theory for uniformly elliptic operators. 
Note that LIP-manifolds shall not be confused with metric tensors whose coefficients are locally Lipschitz continuous.

Recall that for smooth manifolds equipped with a continuous Riemannian metric $g$, the Riemannian distance $\sfd_g$ is defined as follows
\begin{align}\label{def:dg}
    \sfd_g(x, y) = \inf\Big\{ \int_{0}^1 |\dot{\gamma_t}|_g\, \di t: \, \gamma\ \mathrm{piecewise \ }C^\infty, \gamma_0=x, \gamma_1 = y \Big\},
\end{align}
for any $x, y \in M$.

If $g$ is merely measurable, it might not be well-defined everywhere, hence the definition of the Riemannian distance $\sfd_g$ as in \eqref{def:dg}, cannot be used for non-continuous Riemannian metrics. 
However, Norris \cite{norris1997heat} and De Cecco-Palmieri \cite{de1991integral} gave a definition of a distance $\sfd_g$ on $M$ arising from $g$ that avoids this difficulty. 

In the case of a Riemannian manifold with $g, g^{-1}\in L^\infty$, Norris defines the distance $\sfd$ via
 \begin{align*}
     \sfd(x,y) := \sup \left\{ \left[ w(x)-w(y) \right] ,\ w \mathrm{\ is\ Lipschitz\ continuous\ and\ } |\nabla_g w|_g\leq 1\ \mathrm{a.e. } \right\}.
 \end{align*}
 Moreover, for $\rho\in C_c^\infty(B_1^{\R^d}(0), [0,\infty))$, with $\int_{\R^d} \rho\, \di x =1$, define $\rho_\e := \frac{1}{\e}\rho(\frac{x}{\e})$ and $g_\e := \rho_\e * g$ (for details on the convolution on manifolds see \cite{grosser2013geometric, graf2020singularity}). The metrics $g_\e$ are continuous and hence the distance $\sfd_{g_\e}$ is given as in \eqref{def:dg}. Norris then proves that for $\e \to 0$, $\sfd_{g_\e}$ converges to a distance $\sfd_0$ that is locally equivalent to the underlying Euclidean metric induced from a coordinate chart; in particular,  \cite[Theorem 3.6]{norris1997heat} states that $\sfd_0 = \sfd =: \sfd_g$. 

 From the properties of $g$, it follows that all points outside a null set $N_g \subset M$ are Lebesgue points of $g, g^{-1}$. For any null set $N \subset M$, define the set 
 \begin{align}
     \mathrm{Lip}_N(x, y, M):= \{\gamma \in C^{0,1}_{\rm{loc}}([0,1], M): \gamma(0)=x, \gamma(1)=y, \mathcal{L}^1(\{t \in [0,1]: \gamma(t) \in N\}) =0\}.
 \end{align}
 If $\gamma \in C([0,1], M)$ satisfies $\mathcal{L}^1(\{t \in [0,1]: \gamma(t) \in N\}) =0$, we say that $\gamma$ is transversal to $N$ and write $\gamma \perp N$.

Similarly to \cite{ryborz2026infinitesimal}, we will from now on make the following assumption:

\smallskip
\begin{assumption}\label{assumption_on_mfd}
The manifold $(M,g)$ satisfies one of the following:
    \begin{itemize}
    \item[(i)] The manifold $M$ has finite $\vol_g$-volume. 
    \item[(ii)] The manifold $M$ has infinite $\vol_g$-volume and our chosen atlas satisfies that there exists a constant $h \geq 0$ such that all coordinate charts in our atlas satisfy that the smallest eigenvalue of $g$ is uniformly bounded below $h$ and $M$ is complete or the interior of a complete manifold (where the completeness is with respect to the distance induced by the charts as in \cite[(1.9)]{de1991integral}).
\end{itemize}
\end{assumption}

  De Cecco and Palmieri proved in \cite[Theorem (2.18) and Theorem (6.1)]{de1991integral}  that under Assumption \ref{assumption_on_mfd}, it holds $\sfd_g= \boldsymbol{\delta}$, where the distance $\boldsymbol{\delta}$ is given by 
 \begin{align}\label{null_set_def_of_metric}
    \boldsymbol{\delta}(x, y)= \sup_{N\subset M, \vol_g(N)=0} \boldsymbol{\delta}_N(x, y),
 \end{align}
where
 \begin{align}
     \boldsymbol{\delta}_N :=\inf\{L_g(\gamma), \gamma \in \Lip_N(x, y, M)\},
 \end{align}
 and $L_g(\gamma) = \int_0^1 |\dot{\gamma}_t|_g \di t$.
 In \cite[Theorem 4.4]{de1988distanza}, it is shown that for each null set $N_0 \subset M$, $\boldsymbol{\delta} = \boldsymbol{\delta}_0:= \sup_{N, \vol_g(N)=0} \boldsymbol{\delta}_{N \cup N_0}$. 

 Note that the volume measure $\vol_g$ is locally given as $\sqrt{\det g}\di \mathcal{L}^d$, which now allows us to consider the metric measure space $(M, \sfd_g, \vol_g)$. In the following we recall some basic properties of our setting:
\smallskip
  
 \begin{proposition}[Proposition 3.2 in \cite{ryborz2026infinitesimal}]
     Let $\gamma:[0, 1] \to M$ be a $\sfd_g$-absolutely continuous curve such that $\gamma \perp N_g$. Then, for almost every $t \in [0,1]$, it holds
     \begin{align*}
          |\dot{\gamma}_t| \leq |\dot{\gamma}_t|_g.
     \end{align*}
 \end{proposition}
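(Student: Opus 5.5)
We show that $|\dot\gamma_t|$ is bounded by $|\dot\gamma_t|_g$ at almost every $t$, working in a single chart. Since the metric speed is local and $\sfd_g$ is locally bi-Lipschitz equivalent to the Euclidean distance of a chart, we may assume, after restricting $\gamma$ to small subintervals, that $\gamma$ takes values in one coordinate patch $U$ with $\psi(U)\subset\R^d$. There $\lambda\,\mathrm{Id}\le\psi_*g\le\Lambda\,\mathrm{Id}$ a.e. Consequently $\gamma$ is Euclidean-Lipschitz (or at least absolutely continuous), so $\dot\gamma_t$ exists for a.e. $t$. It suffices to prove, for a.e. $t$,
\begin{align*}
\limsup_{h\to 0}\frac{\sfd_g(\gamma_t,\gamma_{t+h})}{|h|}\le|\dot\gamma_t|_g ,
\end{align*}
where we fix a representative of $g$ and evaluate it at points $\gamma_t\notin N_g$.

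The key tool is the characterization $\sfd_g=\boldsymbol{\delta}$ of De Cecco--Palmieri together with \cite[Theorem 4.4]{de1988distanza}, which allows us to add $N_g$ to any null set. For every null set $N\supset N_g$ we have $\boldsymbol{\delta}_N(x,y)\le L_g(\sigma)$ for each Lipschitz $\sigma\in\Lip_N(x,y,M)$. After reparametrization, $\gamma|_{[t,t+h]}$ itself belongs to $\Lip_N$ as long as it is transversal to $N$. The difficulty is that we need this for all null sets $N$ simultaneously, and $\gamma$ need not be transversal to an arbitrary null set.

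We handle this by approximating the time-averaged length of $\gamma$ with nearby translated curves. For $x$ in a small ball $B_\rho(0)$, consider the curves $\gamma^x_s:=\gamma_s+x$ on $[t,t+h]$. Fix a null set $N$. By Fubini, for a.e. $x$ the translated curve is transversal to $N\cup N_g$. Joining $\gamma_t$ to $\gamma_t+x$ and $\gamma_{t+h}+x$ to $\gamma_{t+h}$ by straight segments, which are also transversal for a.e. $x$ by a Fubini argument in polar coordinates, gives
\begin{align*}
\boldsymbol{\delta}_N(\gamma_t,\gamma_{t+h})\le 2\sqrt{\Lambda}\,|x|+\int_t^{t+h}|\dot\gamma_s|_{g(\gamma_s+x)}\,\di s .
\end{align*}
Averaging over $x\in B_\rho(0)$ and letting $\rho\to0$ turns the right-hand side into $\int_t^{t+h}|\dot\gamma_s|_{g(\gamma_s)}\,\di s$, provided the averages $\frac1{|B_\rho|}\int_{B_\rho}g(\gamma_s+x)\,\di x\to g(\gamma_s)$ converge in $L^1(\di s)$. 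This convergence holds because $\gamma\perp N_g$, so a.e. $\gamma_s$ is a Lebesgue point of $g$, combined with the uniform bound $\Lambda$ and dominated convergence, applied to the map $\xi\mapsto|\xi|_{g}$, which is Lipschitz in $g$ for bounded $\xi$. The bound is independent of $N$, so taking the supremum over $N$ yields $\sfd_g(\gamma_t,\gamma_{t+h})\le\int_t^{t+h}|\dot\gamma_s|_g\,\di s$. Dividing by $h$ and applying the Lebesgue differentiation theorem at a.e. $t$ gives the claim.

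The main obstacle is the averaging step. The curve $\gamma$ is only absolutely continuous, not Lipschitz, so we first reparametrize it by arc length, or restrict to sets where $|\dot\gamma|$ is bounded. We must also check carefully that the $L^1$ convergence of the averaged metric along $\gamma$ really follows from the Lebesgue point property at a.e. $\gamma_s$. The transversality of the connecting segments for a.e. $x$ also needs a brief Fubini argument. If the averaging turns out to be delicate, we would instead invoke Norris' approximation $\sfd_g=\lim\sfd_{g_\e}$ and estimate $L_{g_\e}(\gamma)\to L_g(\gamma)$ via the same Lebesgue point argument.
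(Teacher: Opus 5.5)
This paper gives no proof of the statement; it is quoted from \cite{ryborz2026infinitesimal}. Judged on its own, your proof is correct. Translating $\gamma$ is exactly the device needed to pass from the single null set $N_g$ (to which $\gamma$ is transversal) to an arbitrary null set $N$ in $\boldsymbol{\delta}=\sup_N\boldsymbol{\delta}_N$, and your bound on $\boldsymbol{\delta}_N(\gamma_t,\gamma_{t+h})$ does not depend on $N$.

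The points you flag all close:
\begin{itemize}
\item Since $N\subset N'$ implies $\boldsymbol{\delta}_N\le\boldsymbol{\delta}_{N'}$, you may take $N$ Borel and containing $N_g$. So you do not actually need \cite[Theorem 4.4]{de1988distanza}.
\item The connecting segments are transversal for a.e.\ $x$, because the change of variables $y=\tau x$ gives $\int_{B_\rho}\mathcal{L}^1(\{\tau\in[0,1]:\gamma_t+\tau x\in N\})\,\di x=\int_0^1\tau^{-d}\mathcal{L}^d((N-\gamma_t)\cap B_{\tau\rho})\,\di\tau=0$.
\item The limit $\rho\to0$ follows from three facts. First, $\bigl||\xi|_{g_1}-|\xi|_{g_2}\bigr|\le|\xi|_{euc}\,\|g_1-g_2\|/(2\sqrt{\lambda})$. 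Second, the strong Lebesgue-point property holds at $\gamma_s$ for a.e.\ $s$; this is exactly where $\gamma\perp N_g$ enters. Third, everything is dominated by $\sqrt{\Lambda}\,|\dot\gamma_s|_{euc}\in L^1$.
\item For the Lipschitz requirement in $\Lip_N$, use the arc-length reparametrization rather than ``restricting to sets where $|\dot\gamma|$ is bounded'', which does not produce a curve. Transversality survives: if $\sigma=\tilde\sigma\circ\ell$ with $\ell$ the arc-length function, then $\{u:\tilde\sigma_u\in N\}=\ell(\{s:\sigma_s\in N\})$. Since $\ell$ is absolutely continuous, it maps null sets to null sets.
\end{itemize}

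Your fallback via Norris' approximation is the shorter route. For the continuous metrics $g_\e$ one has $\sfd_{g_\e}(\gamma_a,\gamma_b)\le\int_a^b|\dot\gamma_s|_{g_\e(\gamma_s)}\,\di s$, after the standard approximation of absolutely continuous curves by piecewise smooth ones. Moreover $g_\e(\gamma_s)\to g(\gamma_s)$ at every Lebesgue point. Dominated convergence together with $\sfd_g=\lim_{\e\to0}\sfd_{g_\e}$ then gives the integral inequality at once, with no transversality bookkeeping.

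Your main route instead uses only the De Cecco--Palmieri identity $\sfd_g=\boldsymbol{\delta}$, available under Assumption \ref{assumption_on_mfd}, and pays for this with the Fubini arguments. Averaging over translations is in effect a hand-made mollification of the length functional. Both routes are valid.
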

\smallskip
 
 \begin{proposition}[Proposition 3.3 in  \cite{ryborz2026infinitesimal}]\label{metric_speed_depends_on_direction}
 Let $M$ be a smooth manifold and $g$ a Riemannian metric such that $g, g^{-1} \in L^\infty_{\rm loc}$. 
     Let $\gamma:[0,1] \to M$ be a Lipschitz continuous curve and $t\in [0,1]$ such that the metric speed $|\dot{\gamma}_t|$ exists at $t$ and $\gamma$ is differentiable at $t$. Let $c:[0,1] \to M$ be another curve such that $c_t = \gamma_t$, $c$ is differentiable at $t$ and $\dot{c}_t = \dot{\gamma}_t$. Then the metric speed of $|\dot{c}_t|$ of $c$ at $t$ exists and $|\dot{c}_t| = |\dot{\gamma}_t|$. 
 \end{proposition}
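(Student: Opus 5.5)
The plan is to work in local coordinates and compare $c$ with $\gamma$ via the local bi-Lipschitz equivalence of $\sfd_g$ with the Euclidean distance. Fix a chart $\psi\colon U\to\R^d$ around $\gamma_t=c_t$ and a compact neighbourhood $K\subset\psi(U)$ of $\psi(\gamma_t)$. On $K$ we have $\lambda \mathrm{Id}\le \psi_*g\le\Lambda\mathrm{Id}$ a.e. I expect, from Norris' characterisation of $\sfd_g$ (or from $\boldsymbol{\delta}$ with straight segments transversal to null sets), a constant $C$ such that $\sfd_g(x,y)\le C|x-y|_{euc}$ for $x,y$ in a smaller neighbourhood. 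The test function $w$ with $|\nabla_g w|_g\le 1$ is then Euclidean Lipschitz with constant $\sqrt{\Lambda}$ after localisation, so this gives $C=\sqrt{\Lambda}$ up to a localisation argument.

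The key step is the triangle inequality at the level of difference quotients:
\begin{align*}
\Big|\frac{\sfd_g(c_t,c_{t+h})}{|h|}-\frac{\sfd_g(\gamma_t,\gamma_{t+h})}{|h|}\Big|\le \frac{\sfd_g(c_{t+h},\gamma_{t+h})}{|h|}\le C\frac{|\psi(c_{t+h})-\psi(\gamma_{t+h})|_{euc}}{|h|}.
\end{align*}
Since both curves are differentiable at $t$ with the same value and the same derivative, $\psi(c_{t+h})-\psi(\gamma_{t+h})=o(|h|)$. Hence the right-hand side tends to $0$ as $h\to 0$. Because $\lim_{h\to0}\sfd_g(\gamma_t,\gamma_{t+h})/|h|$ exists by assumption, the corresponding limit for $c$ exists and coincides with it.

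The only genuinely delicate point is justifying the local upper bound $\sfd_g\le C|\cdot|_{euc}$ near $\gamma_t$. Globally, a Norris test function could be large outside the chart, and the distance is a global infimum. What I would use first is the $\boldsymbol{\delta}$ formulation. For any null set $N$, a.e. translate of the segment between $x$ and $y$ avoids $N$ in $\mathcal L^1$-measure, by Fubini. Such a nearby polygonal path, with short connecting pieces also chosen transversally, has $g$-length at most $\sqrt{\Lambda}(|x-y|+\e)$. This yields $\boldsymbol{\delta}_N(x,y)\le\sqrt\Lambda|x-y|$ for every null set $N$, and hence the claim via $\sfd_g=\boldsymbol{\delta}$. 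If Assumption \ref{assumption_on_mfd} is not to be invoked, I would instead use $\sfd_g=\lim\sfd_{g_\e}$ with the bound $g_\e\le\Lambda\,\mathrm{Id}$ near the segment, which gives the same estimate directly.
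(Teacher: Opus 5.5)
Your proposal is correct and matches the natural argument for this cited result: the triangle inequality reduces the claim to $\sfd_g(c_{t+h},\gamma_{t+h})/|h|\to 0$, and this follows from a local bound $\sfd_g\le C|\cdot|_{euc}$ in a chart together with $\psi(c_{t+h})-\psi(\gamma_{t+h})=o(|h|)$. The local upper bound is already contained in Norris' local equivalence of $\sfd_g$ with the Euclidean distance, quoted in Subsection \ref{sec:3}, and your bound on each Norris test function is purely local, so that step is not actually delicate; the extra justification via $\boldsymbol{\delta}$ is correct but unnecessary.
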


\smallskip
 
 \begin{proposition}[Proposition 3.4 in \cite{ryborz2026infinitesimal}]\label{metric_speed_continuous_from_direction}
     Let $\gamma^1, \gamma^2 :[0,1] \to M$ be two Lipschitz continuous curves that are contained in a compact set $K$ inside one coordinate patch $V$ and $t \in [0,1]$ such that $\gamma^1_t = \gamma^2_t$ and such that the metric speeds $|\dot{\gamma}^1_t|$ and $|\dot{\gamma}^2_t|$ as well as their derivatives $\dot{\gamma}^1_t,\dot{\gamma}^2_t\in T_{\gamma^1_t}M$ exist. Then there exists a constant $C =C(K)$ such that 
     \begin{align}
         ||\dot{\gamma}^1_t|-|\dot{\gamma}^2_t|| \leq C|\dot{\gamma}^1_t-\dot{\gamma}^2_t|_g.
     \end{align}
 \end{proposition}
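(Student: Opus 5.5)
This is the third of the three recalled properties, Proposition \ref{metric_speed_continuous_from_direction}. I would prove it by reducing to a comparison of $\sfd_g$ with the Euclidean distance in the chart, combined with Proposition \ref{metric_speed_depends_on_direction}. Work in the coordinate patch $V$ via $\psi$, so that $K$ is a compact subset of $\psi(V)\subset\R^d$. By the assumption $g,g^{-1}\in L^\infty_{\rm loc}$, there are constants $0<\lambda\le\Lambda$ with $\lambda\,\mathrm{Id}\le\psi_*g\le\Lambda\,\mathrm{Id}$ a.e.\ on a compact neighbourhood $K'$ of $K$ inside $\psi(V)$.

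The first step is a local Lipschitz bound: $\sfd_g(x,y)\le C_1|x-y|_{euc}$ for $x,y\in K'$ close enough that the segment $[x,y]$ stays in $K'$. I would get this from the characterisation $\sfd_g=\boldsymbol{\delta}$, or more directly from Norris' approximation. The mollified metrics satisfy $g_\e\le\Lambda\,\mathrm{Id}$ near $K$ for small $\e$. Hence $\sfd_{g_\e}(x,y)\le\sqrt{\Lambda}|x-y|$ by testing the straight segment, and passing to the limit $\e\to0$ gives the bound with $C_1=\sqrt\Lambda$.

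Next, write $v_i=\dot\gamma^i_t$ and let $x=\gamma^1_t=\gamma^2_t$. Consider the straight lines $c^i_s:=x+(s-t)v_i$, reparametrised so they are defined on $[0,1]$ near $t$. By Proposition \ref{metric_speed_depends_on_direction}, the metric speed of $c^i$ at $t$ exists and equals $|\dot\gamma^i_t|$. It therefore suffices to compare the metric speeds of the two lines at $t$. The triangle inequality gives
\begin{align*}
\Big|\frac{\sfd_g(c^1_t,c^1_{t+h})}{|h|}-\frac{\sfd_g(c^2_t,c^2_{t+h})}{|h|}\Big|\le\frac{\sfd_g(c^1_{t+h},c^2_{t+h})}{|h|}\le C_1\frac{|h|\,|v_1-v_2|_{euc}}{|h|}.
\end{align*}
Letting $h\to0$ yields $||\dot\gamma^1_t|-|\dot\gamma^2_t||\le C_1|v_1-v_2|_{euc}$.

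Finally, convert to the $g$-norm using $|w|_{euc}\le\lambda^{-1/2}|w|_g$. This requires some care, because $g$ is only defined a.e.\ and the point $\gamma_t$ could a priori lie in a bad set. I would interpret $|\cdot|_g$ at $\gamma_t$ either via a Lebesgue-point value (so that $x\notin N_g$) or via the essential bounds, which give $\lambda|w|^2\le g(w,w)$. This produces the constant $C=\sqrt{\Lambda/\lambda}$, which depends only on $K$ through the chart bounds.

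I expect the main obstacle to be the first step: justifying rigorously the upper Lipschitz comparison of $\sfd_g$ with the Euclidean distance near $K$. This requires Norris' convergence $\sfd_{g_\e}\to\sfd_g$ to hold pointwise, and requires the segments to remain in the region where the bounds hold. Choosing $K'$ with a positive margin and restricting to small $|h|$ handles the latter.
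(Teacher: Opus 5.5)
Your argument is correct and is essentially the standard proof of this statement: a local Lipschitz bound $\sfd_g\le C|\cdot|_{euc}$ on a compact neighbourhood of $K$, the triangle inequality at the common point $\gamma^1_t=\gamma^2_t$, and the ellipticity bound to pass from $|\cdot|_{euc}$ to $|\cdot|_g$. The Lipschitz bound is already contained in Norris' local equivalence of $\sfd_g$ with the chart distance (which the paper recalls), so your anticipated ``main obstacle'' is not one, and the detour through Proposition \ref{metric_speed_depends_on_direction} to straight lines is harmless but unnecessary, since differentiability at $t$ directly gives $|\gamma^1_{t+h}-\gamma^2_{t+h}|_{euc}=|h|\,|\dot\gamma^1_t-\dot\gamma^2_t|_{euc}+o(|h|)$.
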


 \begin{lemma}[Lemma 3.5 in \cite{ryborz2026infinitesimal}]\label{testplans_on_transversal}
     Let $\boldsymbol{\pi}\in \mathcal{P}(C([0,1], M))$ be a test plan and $N\subset M$ a $\vol_g$-null set. Then, it holds that 
     \begin{align}
         \boldsymbol{\pi}\Big(\{\gamma \in C([0,1], M): \gamma \perp N \}\Big)=1.
     \end{align}
 \end{lemma}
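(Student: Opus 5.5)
The statement to prove is Lemma \ref{testplans_on_transversal}: for a test plan $\boldsymbol{\pi}$ and a $\vol_g$-null set $N$, $\boldsymbol{\pi}$-almost every curve spends zero Lebesgue time in $N$. The plan is to use only the bounded compression property of test plans together with Fubini--Tonelli.

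First, since $N$ is $\vol_g$-null and $\vol_g$ is $\sigma$-finite and Borel regular, I replace $N$ by a Borel null set $\tilde N \supset N$. It suffices to prove the claim for $\tilde N$, because $\gamma \perp \tilde N$ implies $\gamma \perp N$. Here I interpret $\{t:\gamma_t\in N\}$ via outer measure, or simply note that transversality to the larger set is stronger. So I assume $N$ is Borel.

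Next, I consider the map $\Phi\colon C([0,1],M)\times[0,1]\to M$, $(\gamma,t)\mapsto \gamma_t = e_t(\gamma)$. This map is continuous, so the set $\{(\gamma,t): \gamma_t\in N\}$ is Borel in the product. Tonelli's theorem then gives
\begin{align*}
\int \mathcal{L}^1(\{t\in[0,1]:\gamma_t\in N\})\,\di\boldsymbol{\pi}(\gamma) = \int_0^1 \boldsymbol{\pi}(\{\gamma:\gamma_t\in N\})\,\di t = \int_0^1 (e_t)_\#\boldsymbol{\pi}(N)\,\di t .
\end{align*}
By the definition of a test plan, $(e_t)_\#\boldsymbol{\pi}(N)\le C(\boldsymbol{\pi})\vol_g(N)=0$ for every $t$. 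Hence the left-hand side vanishes, so the nonnegative integrand $\mathcal{L}^1(\{t:\gamma_t\in N\})$ is zero for $\boldsymbol{\pi}$-a.e.\ $\gamma$. That is precisely $\boldsymbol{\pi}(\{\gamma\perp N\})=1$.

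The only point needing care is measurability: the Borel enlargement of $N$, and the measurability of the set $\{\gamma:\gamma\perp N\}$. The latter follows from Tonelli, since $\gamma\mapsto \mathcal{L}^1(\{t:\gamma_t\in N\})$ is Borel. No energy bound on the curves is needed, only bounded compression.
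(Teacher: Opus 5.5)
Your proof is correct and is essentially the intended argument. The paper only cites this lemma from \cite{ryborz2026infinitesimal}, but it uses the same identity $\int\int_0^1\mathbbm{1}_N(\gamma_t)\,\di t\,\di\boldsymbol{\pi}(\gamma)=\int_0^1(e_t)_\#\boldsymbol{\pi}(N)\,\di t$ in the proof of Lemma \ref{metric_speed_exists_ae}, and combined with bounded compression it gives exactly your conclusion. Your passage to a Borel null superset is a harmless extra precaution: for non-Borel $N$, the set $\{\gamma\perp N\}$ then has full measure for the completion of $\boldsymbol{\pi}$.
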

 
\subsection{Infinitesimal Hilbertianity for two dimensional manifolds with Sobolev regular Riemannian metrics}\label{sec:4}

In this section we will use the capacity theory from the previous section to study the case $d=2$. Some of the preparatory results remain true in higher dimensions than $2$, so they have been formulated in higher generality.
\smallskip

We assume to be in the setting of a $d$-dimensional manifold with a Riemannian metric $g$ such that $g, g^{-1}\in L^\infty_{\rm loc}$. We work with an atlas that satisfies Assumption \ref{assumption_on_mfd}. Denote by $\sfd_g$ the induced Riemannian distance on $M$. The next two results hold for $d \geq 2$. 
\begin{lemma}\label{metric_speed_exists_ae}
Let $d \geq 2$. 
    Let $U \subset M$ such that $U$ lies in one coordinate patch and let $v \in \R^d$. Then for almost every $x \in U$ the metric speed of the curve $\gamma^x: t \mapsto x+tv$ exists at $t=0$.
\end{lemma}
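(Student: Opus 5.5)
Work in coordinates: identify $U$ with an open subset of $\R^d$. Shrinking to a compact set, we may assume $\lambda \mathrm{Id}\le g\le \Lambda \mathrm{Id}$ a.e. Then $\sfd_g$ is locally bi-Lipschitz equivalent to the Euclidean distance (Norris' characterisation of $\sfd_g$ through $g_\e$). We foliate $U$ by the lines $\ell_y:=\{y+tv: t\in\R\}$, with $y$ ranging over the hyperplane $v^\perp$. On each line consider the map $c^y: t\mapsto y+tv$. As long as it stays in $U$, this map is $\sfd_g$-Lipschitz with constant at most $C|v|$, because $\sfd_g(y+sv,y+tv)\le C|s-t||v|$ by the local bi-Lipschitz equivalence. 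Hence each $c^y$ is $\sfd_g$-absolutely continuous on every compact subinterval of $\{t: y+tv\in U\}$. By the Ambrosio--Kirchheim result recalled in Subsection \ref{sec:2}, the limit
\begin{align*}
\lim_{h\to0}\frac{\sfd_g(c^y_{t+h},c^y_t)}{|h|}
\end{align*}
exists for $\mathcal{L}^1$-a.e.\ $t$.

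Next we translate this into a statement about $x$. For $x=y+tv$, the curve $\gamma^x$ is the time-shift $s\mapsto c^y_{t+s}$. So the metric speed of $\gamma^x$ at $s=0$ exists if and only if the metric speed of $c^y$ exists at time $t$. Let $E\subset U$ be the set of points $x$ at which the metric speed of $\gamma^x$ at $0$ fails to exist. Then for every $y$ the slice $\{t: y+tv\in E\}$ is $\mathcal{L}^1$-null.

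It remains to conclude that $\mathcal{L}^d(E)=0$ via Fubini, in coordinates adapted to $v$ (a linear change of variables, which preserves null sets). Then $\vol_g(E)=0$ follows from the local equivalence of $\vol_g$ and $\mathcal{L}^d$.

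The main obstacle is the measurability of $E$, which Fubini requires. To handle it, I would write the upper and lower metric derivatives as
\begin{align*}
\overline{D}(x)=\lim_{n\to\infty}\sup_{0<|h|<1/n,\ h\in\mathbb{Q}}\frac{\sfd_g(x+hv,x)}{|h|}
\end{align*}
and similarly $\underline{D}$ with $\inf$. Restricting to rational $h$ is legitimate because $h\mapsto \sfd_g(x+hv,x)$ is continuous, since $\sfd_g$ is continuous with respect to the Euclidean topology. The map $x\mapsto \sfd_g(x+hv,x)$ is continuous for each fixed $h$, so $\overline{D}$ and $\underline{D}$ are Borel. Consequently $E=\{\overline{D}>\underline{D}\}$ is Borel. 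Existence of the limit also presupposes finiteness, but this is automatic since $\overline{D}\le C|v|$. Fubini then applies and finishes the proof.
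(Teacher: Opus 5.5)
Your proof is correct and follows essentially the same route as the paper. The metric speed exists at a.e.\ time along each $\sfd_g$-Lipschitz line in direction $v$ (Ambrosio--Kirchheim), and this is transferred to a.e.\ point by Fubini; the paper packages the Fubini step as a test plan $\boldsymbol{\pi}$ built from translates of one segment, with $(e_t)_\#\boldsymbol{\pi}\geq c\,\mathcal{L}^d$ on a small ball, whereas you integrate directly in coordinates $(y,t)\in v^\perp\times\R$. Your explicit proof that the exceptional set is Borel (via $\overline{D},\underline{D}$ over rational $h$) supplies a measurability point that the paper uses implicitly when it integrates $\mathbbm{1}_N$.
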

Similar observations were made by de Cecco and Palmieri \cite{de1995lip}, when they analysed what they called the ``derivative" of the distance function. 
\begin{proof}
We may assume that $U \subset \R^d$.
    Denote $\vartheta:= |v|_{euc}$.
    Fix an $y\in U$ and find $r>0$ such that $B^{euc}_{4r\vartheta}(y) \subset U$.
    For $x \in B_{r\vartheta}(y)$, reparametrise the curve $\gamma^x:[0,1]\to U, t \mapsto x+r(\frac{1}{2}-t)v$. 
    Define the test plan $\boldsymbol{\pi}$ via
    \begin{align*}
        \di \boldsymbol{\pi}(\gamma)= \left\{ \begin{array}{ll}
            \frac{1}{\mathcal{L}^{d+1}(B_{r\vartheta}(y))}\di \mathcal{L}^{d}(x)  & \mathrm{if}\ \gamma=\gamma^x \ \mathrm{for\ some\ } x \in B_{r\vartheta}(y), \\
            0 & \mathrm{otherwise.}
        \end{array} \right.
    \end{align*}
    Denote by $N$ the set of points $x$ for which the metric speed of $\gamma^x$ at $t=0$ does not exist. Now, as $\gamma^x$ is absolutely continuous, we have that $\gamma^x \perp N$ for every $x \in B_{r\vartheta}(y)$. Note that there exists a $c>0$ such that for each $t\in [\frac{1}{4},\frac{3}{4}]$, it holds 
    \begin{align*}
        (e_t)_{\#}\boldsymbol{\pi} \geq c \mathcal{L}^d\LL {B_{r\vartheta/8}(y)}.
    \end{align*}
    Now Fubini's theorem yields that
    \begin{align*}
        \frac{c}{2} \mathcal{L}^d (N \cap B_{r\vartheta/8}(y)) \leq  \int_0^1 \int_M \mathbbm{1}_{N}(y)\,\di(e_t)_\#\boldsymbol{\pi}(y)\di t =  \int_{C([0,1], M)} \int_0^1 \mathbbm{1}_{N}(\gamma_t)\,\di t\di\boldsymbol{\pi}(\gamma)=0.
    \end{align*}
    This can be done for any $y \in U$ and hence proves the lemma. 
\end{proof}

\begin{lemma}\label{lemma_directional_metric_speed_measurable}
Let $d \geq 2$.
    Let $U \subset M$ such that $U$ lies in one coordinate patch and let $v \in \R^d$. Define the curve $\gamma^x: t \mapsto x+tv$. Then the $\mathcal{L}^d$-a.e. defined map $x \mapsto |\dot{\gamma}^x_0|$ is Lebesgue measurable.
\end{lemma}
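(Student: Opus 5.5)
Work in coordinates, so $U\subset\R^d$, and write the metric speed at $t=0$, wherever it exists, as a pointwise limit of measurable functions. For $h\neq 0$ with $|h|$ small, set
\begin{align*}
    \Phi_h(x):=\frac{\sfd_g(x,x+hv)}{|h|},
\end{align*}
defined for $x$ in the open set $U_h:=\{x\in U:\ x+sv\in U \text{ for } |s|\leq |h|\}$. Since $\sfd_g$ is locally equivalent to the Euclidean distance (Norris' result recalled in Subsection \ref{sec:3}), it is continuous on $U\times U$. Hence each $\Phi_h$ is continuous on $U_h$, in particular Borel.

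The difficulty is that $h$ ranges over an uncountable set, so I would reduce to countably many $h$. For fixed $x$, the map $h\mapsto\Phi_h(x)$ is continuous on $\{0<|h|<\delta_x\}$. Therefore $\limsup_{h\to0}\Phi_h(x)$ and $\liminf_{h\to0}\Phi_h(x)$ agree with the corresponding limits taken over $h\in\mathbb{Q}\setminus\{0\}$. Concretely,
\begin{align*}
    \overline{\Phi}(x):=\lim_{k\to\infty}\ \sup_{h\in\mathbb{Q},\,0<|h|<1/k}\Phi_h(x),\qquad \underline{\Phi}(x):=\lim_{k\to\infty}\ \inf_{h\in\mathbb{Q},\,0<|h|<1/k}\Phi_h(x).
\end{align*}
Each inner supremum or infimum is over countably many Borel functions. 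To handle the domains, restrict to $U_{1/k_0}$ for a fixed $k_0$ and exhaust $U$ by the countable family $U_{1/k_0}$, $k_0\in\N$. With this, $\overline{\Phi}$ and $\underline{\Phi}$ are Borel functions with values in $[0,\infty]$; by local equivalence with the Euclidean distance they are in fact locally bounded.

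The set where the metric speed exists is $\{\overline{\Phi}=\underline{\Phi}\}$, which is Borel. On this set, $|\dot{\gamma}^x_0|=\overline{\Phi}(x)$. By Lemma \ref{metric_speed_exists_ae} the complement of this set is $\mathcal{L}^d$-null. So the a.e.-defined map $x\mapsto|\dot\gamma^x_0|$ coincides a.e. with the Borel function $\overline{\Phi}$ and is therefore Lebesgue measurable.

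The only step needing care is the continuity of $\sfd_g$ used to pass to rational $h$. It follows from the bi-Lipschitz comparison of $\sfd_g$ with the Euclidean distance on compact subsets of a chart, which in turn follows from $\lambda\,\mathrm{Id}\leq g\leq\Lambda\,\mathrm{Id}$ together with the definition of $\sfd_g$ via $\sfd_{g_\e}$ or via $\boldsymbol{\delta}$.
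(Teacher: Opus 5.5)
Your proof is correct and follows essentially the same route as the paper. Both arguments use the continuity of $\sfd_g$ on $U\times U$ to get measurability of the difference quotients $x\mapsto \sfd_g(x,x+hv)/|h|$ for each fixed $h$, reduce to countably many $h$, and invoke Lemma \ref{metric_speed_exists_ae} to identify the resulting $\limsup$ with $|\dot\gamma^x_0|$ almost everywhere. The paper simply takes $h=1/n$ and relies on the a.e. existence of the full limit, whereas your use of all rational $h$ together with continuity in $h$ additionally shows that the set where the speed exists is Borel, which is a harmless refinement.
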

\begin{proof}
We may assume that $U \subset \R^d$.
    Denote $\gamma^x:[-\delta,\delta] \to U, t \mapsto x+tv$. By Lemma \ref{metric_speed_exists_ae}, we get that the metric speed of the curve $\gamma^x$ exists at $0$ for almost every $x$, and it is given by 
    \begin{align*}
        |\dot{\gamma}^x_0| = \lim_{h \to 0} \frac{\sfd_g(x, x+hv)}{|h|}.
    \end{align*}
    Now for $n\in \N$ define the function 
    \begin{align*}
        s(x, n):= n\sfd_g\Big(x, x+\cfrac{v}{n}\Big).  
    \end{align*}
    As $\sfd_g$ is continuous on $U^2$, we get that for all $n$, it holds that $s(\cdot, n)$ is a measurable function. Hence, $\limsup_{n \to \infty} s(\cdot, n)$ is measurable. Noting that $\limsup_{n \to \infty} s(x, n) = |\dot{\gamma}^x_0|$ for almost every $x$, we get that $x \mapsto |\dot{\gamma}^x_0|$ is Lebesgue measurable.
\end{proof}

We have now collected all ingredients to answer Question \ref{q1} in two dimensions.
\smallskip

\begin{proposition}\label{2d_metric_speed_sobolev}
    Let $d=2$ and $p \geq 1$. Let $M$ be a $d$-dimensional manifold and $g$ a Riemannian metric on $M$ such that $g, g^{-1} \in L^\infty_{\rm loc} \cap W^{1,p}_{\rm loc}$ and suppose that Assumption \ref{assumption_on_mfd} is satisfied. Then the following holds for almost every $x \in M$: Given a curve $\gamma\colon[0,1] \to M$ with $\gamma_t =x$ for some $t\in (0,1)$ and such that both the metric speed $|\dot{\gamma}_t|$ of $\gamma$ at time $t$ and the derivative of $\dot{\gamma}_t \in T_xM$ at time $t$ exist, then $|\dot{\gamma}_t| = |\dot{\gamma}_t|_{g(x)}$.
\end{proposition}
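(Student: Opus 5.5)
The plan is to reduce to straight lines, verify the identity for lines in a countable dense set of directions, and then use the capacity bounds of Subsection \ref{subsec:cap_bound} to control where the metric coefficients can misbehave near $x$.

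\textbf{Reduction to straight lines.} Work in a chart $U\subset\R^2$. By Proposition \ref{metric_speed_depends_on_direction}, the metric speed at $t$ of any curve that is differentiable at $t$ depends only on $\dot\gamma_t$. So it suffices to treat the lines $\gamma^x:s\mapsto x+sv$. By Proposition \ref{metric_speed_continuous_from_direction}, the map $v\mapsto|\dot\gamma^x_0|$ is Lipschitz at every point where it is defined. Hence it suffices to prove $|\dot\gamma^x_0|=|v|_{g(x)}$ for $v$ in a countable dense set $D\subset\R^2$ and for all $x$ outside a null set $N_v$; we then discard $\bigcup_{v\in D}N_v$. Lemmas \ref{metric_speed_exists_ae} and \ref{lemma_directional_metric_speed_measurable} guarantee that these speeds exist a.e. and are measurable. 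Since $W^{1,p}_{\rm loc}\subset W^{1,1}_{\rm loc}$ and $1<d=2$, after a cutoff Theorem \ref{quasicontinuity} applies to each coefficient of $g$ with $p=1$. Fix the precise representative $g^*$ and open sets $V_k$ with $\Capa_1(V_k)\le 2^{-k}$ such that $g^*|_{\R^2\setminus V_k}$ is continuous. We use $\boldsymbol\delta_{N_0}$ with $N_0=\{g\neq g^*\}\cup N_g$ (allowed by \cite[Theorem 4.4]{de1988distanza}), so that along transversal curves the length may be computed with $g^*$.

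\textbf{Upper bound $|\dot\gamma^x_0|\le|v|_{g(x)}$.} At a Lebesgue point $x$ of $g$, for small $h$ and small $\rho\ll h$, average the $g$-lengths of the parallel segments $y+[0,h]v$ over $y\in B_\rho(x)$. By Fubini, almost all of these segments are transversal to the relevant null sets, and their averaged length equals $h|v|_{g(x)}+o(h)$ by the Lebesgue point property. Choosing a good $y$ and using $\sfd_g(x,y),\sfd_g(x+hv,y+hv)\lesssim\rho$, we get $\sfd_g(x,x+hv)\le h|v|_{g(x)}+o(h)$.

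\textbf{Lower bound (the main obstacle).} We need: for a.e. $x$, every Lipschitz curve $c$ from $x$ to $x+hv$ that is transversal to $N_0$ has $L_g(c)\ge h|v|_{g(x)}-o(h)$. Only curves staying in $B_{Ch}(x)$ matter, by local equivalence with the Euclidean distance. Two ingredients are used.

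First, a capacity density statement: for a.e. $x\notin V_k$, $\Capa_1(V_k\cap B_r(x))/r\to0$ as $r\to 0$. I would prove this by a Vitali covering argument. The set of points where $\limsup_r \Capa_1(V_k\cap B_r(x))/r>\delta$ has $\mathcal H^1$-content, and hence $\mathcal L^2$-measure, bounded by $C\,\Capa_1(V_k)/\delta$. We then let $k\to\infty$.

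Second, a slicing bound. The proof of Lemma \ref{capacity_integral_estimate_one_line2d} shows the following: if a set $S\subset B_{Ch}(x)$ is covered by an open set $W$, and $S$ meets every line orthogonal (in $g(x)$) to $v$ over a set of parameters of measure $\alpha$, then $\Capa_1(W)\gtrsim\alpha$ (after rescaling $B_{Ch}$ to unit size and using the Sobolev step from the Corollary). Apply this with $W=V_k\cap B_{Ch}(x)$ and $S=c\cap V_k$. It follows that the $g(x)$-orthogonal projection of $c\cap V_k$ onto $\R v$ has measure $o(h)$.

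On $c\setminus V_k$, continuity of $g^*|_{\R^2\setminus V_k}$ at $x$ gives $g^*\ge(1-\e)g(x)$. Therefore
\begin{align*}
L_g(c)\ \ge\ (1-\e)^{1/2}\int_{\{s:\,c_s\notin V_k\}}|\dot c_s|_{g(x)}\,\di s\ \ge\ (1-\e)^{1/2}\big(h|v|_{g(x)}-o(h)\big),
\end{align*}
since the $g(x)$-length dominates the length of the $g(x)$-orthogonal projection onto $\R v$, which must cover a segment of $g(x)$-length $h|v|_{g(x)}$. Letting $h\to0$ and then $\e\to0$, $k\to\infty$ gives the lower bound for a.e. $x$.

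The delicate points are the capacity density lemma and making the slicing argument work for projections that are not bi-Lipschitz images. I would first try to reuse Lemma \ref{capacity_integral_estimate_one_line2d} directly, slicing along lines orthogonal to $v$.
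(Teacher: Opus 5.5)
Your route is genuinely different from the paper's, and it works once one step is repaired.

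\textbf{Comparison with the paper.} The paper argues by contradiction on a positive-measure set $S$ where the speed along $v$ has a deficit $\zeta$. It uses Norris' approximation $\sfd_g=\lim_{\varsigma\to0}\sfd_{g_\varsigma}$ and takes $g_\varsigma$-minimising curves from $x$ to $x+rv$, which it shows are uniformly bi-Lipschitz. It deduces that on a parameter set of measure $\gtrsim r\zeta$ these curves pass through the small-capacity open set $\Omega$, off which $g$ is continuous and $g_\varsigma\to g$ uniformly. It then applies Lemma~\ref{2dcapacity_of_line_segments} on each ball of a Besicovitch disjoint family. Summing the $W^{1,1}$-integrals over these disjoint balls and using the Sobolev embedding contradicts $\Capa_1(\Omega)$ being small.

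You instead prove the lower bound directly, against all curves admissible for $\boldsymbol{\delta}_{N_0}\le\boldsymbol{\delta}=\sfd_g$, and you only look at projections onto $\R v$. The bound $\Capa_1(W)\ge c\,\mathcal{L}^1(\pi(W))$ for open $W$ holds because on a.e.\ fibre of $\pi$ meeting $W$ an admissible $f$ is $\ge 1$ somewhere and tends to $0$ at infinity. This controls $\pi(V_k\cap B_{Ch}(x))$ independently of the curve. So your worry about non-bi-Lipschitz projections is unfounded. You also avoid the geodesics of $g_\varsigma$, the bi-Lipschitz claim, the inductive capacity lemma, and the uniform convergence of $g_\varsigma$. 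What the paper's route buys is the uniform capacity bound for bi-Lipschitz images in Section~\ref{sec:cap}, which is of independent interest. It also never needs a pointwise density statement.

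\textbf{The step that fails: your capacity density lemma.} For a fixed $k$ it is false for general open sets of small capacity, and nothing in the quasicontinuity construction excludes this. Let $C\subset[0,1]$ be a fat Cantor set, $D$ a disc containing a neighbourhood of $[0,1]^2$, and $V=D\setminus(C\times C)$. For every $x\in C\times C$, every vertical line at distance $<r/2$ from $x$ meets $V\cap B_r(x)$. Hence $\Capa_1(V\cap B_r(x))\ge c\,r$ on a positive-measure set outside $V$.

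The same example shows why the Vitali argument cannot work. It needs $\sum_i\Capa_1(V\cap B_{r_i}(x_i))\lesssim\Capa_1(V)$ for disjoint balls. Here, disjoint balls of radius $r$ centred in $C\times C$ give a sum $\gtrsim\mathcal{L}^2(C\times C)/r$.

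\textbf{The repair.} You only need the bad set to become small as $k\to\infty$, and this has a short proof. Take $f_k\in K^1$ with $0\le f_k\le 1$, $V_k\subset\{f_k\ge1\}^\circ$ and $\int|Df_k|\,\di y\le 2\Capa_1(V_k)$. Take a cutoff $\phi\equiv1$ on $B_r(x)$ with $\supp\,\phi\subset B_{2r}(x)$ and $|D\phi|\le 2/r$. Testing with $\phi f_k$ gives
\begin{align*}
\Capa_1(V_k\cap B_r(x))\le\int_{B_{2r}(x)}|Df_k|\,\di y+\frac{2}{r}\int_{B_{2r}(x)}f_k\,\di y .
\end{align*}
At common Lebesgue points of $f_k$ and $|Df_k|$ this yields $\limsup_{r\to0}\Capa_1(V_k\cap B_r(x))/r\le 8\pi f_k(x)$. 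So the set where this $\limsup$ exceeds $\delta$ has measure at most $C\delta^{-2}\|f_k\|_{L^2}^2\le C\delta^{-2}\Capa_1(V_k)^2$.

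Together with Theorem~\ref{basic_properties_capa}(v), for each $m$ the following set is null: points which, for every $k$, lie in $V_k$ or have density $>1/m$. Off the union of these null sets, your projection estimate gives $\liminf_{h\to0}\sfd_g(x,x+hv)/h\ge|v|_{g(x)}-C/m$ for every $m$. This localisation-plus-Sobolev step is the same mechanism as the paper's summation over Besicovitch balls.

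\textbf{A smaller point.} In your upper bound the averaging radius must be comparable to $h$. The Lebesgue point property only bounds the averaged error by $C\rho^{-1}\int_{B_{2h}(x)}|g-g(x)|\,\di y=o(h^2)/\rho$. So take $\rho=\eta h$, let $h\to0$, then $\eta\to0$. The paper itself takes this inequality from the first proposition quoted in Subsection~\ref{sec:3}.
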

\begin{proof}
Proving the proposition for $p=1$ is sufficient as $W^{1,p}_{\rm loc} \subset W^{1,1}_{\rm loc}$ for all $p \geq 1$. Hence, we may assume that $p =1$.
We argue by contradiction and suppose the statement is false. \\
\textbf{Step 1.} Reduction to one coordinate patch and one direction. \\
    Let $(U_i, \psi_i)_{i \in \N}$ be a countable atlas of relatively compact coordinate patches of $M$. Let furthermore $(v_j)_{j\in \N}$ be a dense set in $\R^{2}$. Fixing one $i \in \N$, we may assume that $U_i \subset \R^2$. 
    By Lemma \ref{metric_speed_exists_ae}, we have that for almost every $x \in U_i$, the metric speed of the curve $t \mapsto x+tv_j$ exists at $t=0$ for all $j \in \N$. We denote the complement of that set by $N_i$ and note that $N_i$ is a $\vol_g$ (and hence $\mathcal{L}^2$)-null set.
    Fix an $x \in U_i \setminus N_i$ such that there exists a curve $\gamma:[0,1] \to M$ with $\gamma_t =x$ for some $t$ and such that the metric speed $|\dot{\gamma}_t|$ of $\gamma$ at time $t$ and the derivative of $\dot{\gamma}_t \in T_xM$ at time $t$ exist, but $|\dot{\gamma}_t| \neq |\dot{\gamma}_t|_{g(x)}$. Denote $v:= \dot{\gamma}_t \in T_xM$. Now by Proposition \ref{metric_speed_continuous_from_direction} and Proposition \ref{metric_speed_depends_on_direction}, we get that there exists a $j \in \N$ such that $\gamma^{x,j}: t \mapsto x+tv_j$ satisfies $|\dot{\gamma}^{x,j}_0| \neq |v_j|_{g(x)}$.
    Define the set 
    \begin{align*}
        S_{i,j, n}:= \Big\{x \in U_i \setminus N_i: \ ||\dot{\gamma}^{x,j}_0| - |v_j|_{g(x)}| \geq \frac{1}{n} \Big\}. 
    \end{align*}
    Note that by Lemma \ref{lemma_directional_metric_speed_measurable}, $S_{i,j,n}$ is measurable.
    Now if the statement of the proposition was not true, that translates into 
    \begin{align*}
       \vol_g\Big( \bigcup_{i,j,n \in \N} S_{i,j,n}\Big) >0.
    \end{align*}
    It then follows that there must exist some $i, j, n \in \N$ such that $\vol_g (S_{i,j,n})>0$. Using that each $U_i$ is relatively compact, and hence $\mathcal{L}^2$ and $\vol_g$ are equivalent on $U_i$, it follows that: \\

    There exists an open, relatively compact set $U \subset \R^2$, a vector $v \in \R^2$ and $\zeta > 0$, $\delta >0$, and a Lebesgue measurable set $S \subset U$ such that
    \begin{align}
        \mathcal{L}^2(S)>\delta,
    \end{align}
     and such that for each $x \in S$, the metric speed $|\dot{\gamma}^{x,v}_0|$ of the curve $\gamma^{x,v}: t \mapsto x+tv$ at $0$ exists and satisfies
    \begin{align}\label{set_zeta}
        |\dot{\gamma}^{x,v}_0| \leq |v|_{g(x)}-\zeta.
    \end{align}
    We may furthermore assume that $|v|_{euc} =1$.
    This concludes the first step. \\

    \textbf{Step 2.} The existence of faulty lines and their properties. \\
     We now define the $\Capa_1$/$\mathcal{L}^d$-null set $N$ as the set of non-Lebesgue points of $g$. Now for each $x \in S \setminus N$, there must exist a $r_x>0$ such that for $r \in (0,r_x)$ it holds $\sfd_g(x, x+rv) < |v|_{g(x)}-\frac{3\zeta}{4}$.
     Recall that $\sfd_g = \lim_{\varsigma \to 0} \sfd_{g_\varsigma}$, where $g_\varsigma := \rho_\varsigma * g$. Now, it follows that for each $x \in S \setminus N$ and $r \in (0, r_x)$, there exists a $\varsigma_{x,r}>0$ such that for all $\varsigma \in (0, \varsigma_{x,r})$, it holds
    \begin{align}\label{distance_assumption_step2_2d}
        |g(x)-g_\varsigma(x)|_{op} \leq \frac{\zeta}{4} \ \mathrm{and}\ \sfd_{g_\varsigma}(x, x+rv) < r\Big(|v|_{g(x)}- \frac{\zeta}{2}\Big).
    \end{align}
    As $g_\varsigma$ is smooth, the Arzela-Ascoli theorem yields that there exists a length minimising Lipschitz continuous curve $\gamma^{r, \varsigma}:[0,r] \to U$ such that
    \begin{align}\label{shorter_curves2d}
        \gamma^{r, \varsigma}_0 =x, \ \gamma^{r, \varsigma}_r = x+rv, \ \int_0^r |\dot{\gamma}^{r, \varsigma}_t|_{g_\varsigma(\gamma^{r, \varsigma}_t)}\,  \di t = \sfd_{g_\varsigma}(x, x+rv) < r\Big(|v|_{g(x)}- \frac{\zeta}{2}\Big).
    \end{align}
    Let $0<\lambda'<1< \Lambda'$ such that on $U$ it holds $\lambda' \mathrm{Id}_2 \leq g \leq \Lambda' \mathrm{Id}_2$ and such that $\sfd_g$ and $\sfd_{euc}$ are $(\lambda', \Lambda')$-equivalent in $U$. As $U$ is relatively compact, we can shrink $\varsigma_{x, r}>0$ and find $\lambda, \Lambda>0$ with $0<\lambda < \lambda' < 2\lambda <1<\Lambda<2\Lambda'$ such that for all $\varsigma \in (0, \varsigma_{x, r})$ it holds that $\lambda \mathrm{Id}_2 \leq g_\varsigma \leq \Lambda \mathrm{Id}_2$ and such that $\sfd_{g_\varsigma}$ and $\sfd_{euc}$ are $(\lambda, \Lambda)$-equivalent in $U$.
    
    Fix a $\varsigma \in (0, \varsigma_{x, r})$ and write from now on $\gamma^r:= \gamma^{r, \varsigma}$.
    It follows that 
    \begin{align}
         \int_0^r |\dot{\gamma}^r_t|_{euc} \di t \leq \Lambda \int_0^r |\dot{\gamma}^r_t|_{g_\varsigma(\gamma^r_t)}\,  \di t \leq 2 \Lambda^2r,
    \end{align}
    hence we may reparametrise $\gamma^r$ to be of constant Euclidean speed and get that the Lipschitz constant is bounded from above by $2\Lambda^2$. As $\sfd_{g_\varsigma}(x, x+rv) \geq \lambda r$, we get that the Euclidean speed is bounded from below by $\frac{\lambda}{2\Lambda}$, as otherwise one would have
    \begin{align*}
        \lambda r \leq \int_0^r |\dot{\gamma}^r_t|_{g_\varsigma(\gamma^r_t)} \, \di t \leq \Lambda \int_0^r |\dot{\gamma}^r_t|_{euc} \, \di t \leq \frac{\lambda r}{2}.
    \end{align*} 
    It follows that for each $s, s' \in [0,r]$, $s<s'$ it holds
    \begin{align}\label{minimal_length_of_curve_segment_2d}
        \int_s^{s'} |\dot{\gamma}^r_t|_{g_\varsigma(\gamma^r_t)}\, \di t \geq \lambda \int_s^{s'} |\dot{\gamma}^r_t|_{euc}\, \di t \geq \frac{\lambda^2}{2\Lambda} |s-s'|.
    \end{align} 
   \textbf{Claim.} For each $s, s' \in [0,r]$, $s<s'$ it holds 
    \begin{align}
        \frac{\lambda^3}{4\Lambda^2} |s-s'| \leq \sfd_{euc}(\gamma_s, \gamma_{s'}).
    \end{align}
    \textit{Proof of the claim.}
    Suppose this was not the case for some $s, s' \in [0,r]$. Then we could replace the segment between $\gamma_s$ and $\gamma_{s'}$ by a straight line segment between $\gamma_s$ and $\gamma_{s'}$. That would give a curve $\Tilde{\gamma}:[s, s'] \to U$ such that $\Tilde{\gamma}_s = \gamma_s$ and $\Tilde{\gamma}_{s'} = \gamma_{s'}$ with 
    \begin{align*}
        \int_s^{s'} |\dot{\Tilde{\gamma}}_t|_{euc} \di t \leq \frac{\lambda^3}{4\Lambda^2} |s-s'|
    \end{align*}
    which using \eqref{minimal_length_of_curve_segment_2d} yields
    \begin{align}
        \int_s^{s'} |\dot{\Tilde{\gamma}}_t|_{g_\varsigma(\Tilde{\gamma}_t)}\, \di t \leq \frac{\lambda^3}{4\Lambda} |s-s'| < \int_s^{s'} |\dot{\gamma}^r_t|_{g_\varsigma(\gamma^r_t)}\, \di t,
    \end{align}
    so replacing $\gamma^r$ with $\Tilde{\gamma}$ on $[s, s']$ yields a shorter curve.
    This contradicts the length optimality of $\gamma^r$ with respect to $g_\varsigma$ and hence proves the claim.  
    
    This yields that we can assume that there exists a constant $C>1$ that only depends on $U$ and $g$ such that for each $0 \leq s< s' \leq 1$, it holds
    \begin{align}\label{bi-Lipschitz_curves2d}
        \frac{1}{C}|s-s'|\leq \sfd_{euc}(\gamma^r_s, \gamma^r_{s'})\leq C |s-s'|.
    \end{align}
    Observe that by the properties of inner products on $\R^2$, it holds
    \begin{align*}
        |rv|_{g(x)} =\Big|\int_0^r \dot{\gamma}^r_t\, \di t\Big|_{g(x)} \leq \int_0^r |\dot{\gamma}^r_t|_{g_\varsigma(x)}\, \di t .
    \end{align*}
    Note that in the last integral, we evaluate every vector with respect to $g_\varsigma(x) = g_\varsigma(\gamma^r_0)$ instead of $g_\varsigma(\gamma^r_t)$. 
    By \eqref{distance_assumption_step2_2d}, it follows that 
    \begin{align*}
        \int_0^r |\dot{\gamma}^r_t|_{g(x)} \di t \geq |rv|_{g(x)} \geq \sfd_{g_\varsigma}(x, x+rv) + \frac{\zeta r}{2} = \int_0^r |\dot{\gamma}^r_t|_{g_\varsigma(\gamma^r_t)} \di t + \frac{\zeta r}{2}.
    \end{align*}
    Hence, 
    \begin{align}\label{integral_speed_comparison2d}
        \int_0^r |\dot{\gamma}^r_t|_{g(x)} -|\dot{\gamma}^r_t|_{g_\varsigma(\gamma^r_t)} \di t \geq \frac{\zeta r}{2}.
    \end{align}
    Note that \eqref{bi-Lipschitz_curves2d} implies that for all $t \in [0,r]$, it holds $|\dot{\gamma}^r_t|_{g(x)} -|\dot{\gamma}^r_t|_{g_\varsigma(\gamma^r_t)} \leq C(U, g)$ and hence, combining this with \eqref{integral_speed_comparison2d} yields
    \begin{align}
        \mathcal{L}^1\Big(\Big\{t: \ |\dot{\gamma}^r_t|_{g(x)} -|\dot{\gamma}^r_t|_{g_\varsigma(\gamma^r_t)} \geq \frac{\zeta }{4} \Big\}\Big) \geq \frac{\zeta r}{4C(U, g)}.
    \end{align}
    Using \eqref{bi-Lipschitz_curves2d} once more, gives that there exists a constant $C=C(g, U)>1$ and a set $A^{x, r, \varsigma} \subset [0,r]$ such that 
    \begin{align}\label{discontinuity_of_g_along_line2d}
        &\mathcal{L}^1(A^{x, r, \varsigma}) \geq \frac{r\zeta}{4C},  \nonumber \\
        &\forall t \in A:\ \norm{g(x)-g_\varsigma(\gamma^{r, \varsigma}_t)}_{euc} \geq \frac{\zeta}{4C},
    \end{align}
    where in the last inequality we evaluate the Euclidean matrix norm. This finishes the second step.\\

    \textbf{Step 3.} The faulty lines have big $1$-capacity. \\
    Let us fix some constants for this step. We fix $C=C(U, g)>1$ for \eqref{bi-Lipschitz_curves2d} and \eqref{discontinuity_of_g_along_line2d} to hold. We fix $\zeta \in (0,1)$ for \eqref{set_zeta} (and hence \eqref{discontinuity_of_g_along_line2d}) to be true. Finally, $C_2$ shall denote the constant from the Besicovitch covering theorem in $\R^2$ and $c_{1,2}$ denotes the constant such that for each $\phi \in W^{1,1}(\R^2)$, it holds $c_{1,2}\norm{\phi}_{L^2} \leq \norm{D\phi}_{L^1}$.  
    Define
    \begin{align}\label{set_eps}
        \epsilon:= \frac{\zeta}{1600\pi C_2C^5}\cdot \frac{\delta}{2} >0,
    \end{align}
    By Theorem \ref{quasicontinuity} and Theorem \ref{basic_properties_capa} (v), we can find an open set $\Omega \subset U$ such that 
    \begin{align}\label{Capacity_omega}
        \Capa_1(\Omega) \leq \min\Big(1, \frac{c_{1,2}}{\sqrt{\mathcal{L}^2(U)}}\Big) \cdot \frac{\epsilon}{80}, \ \mathcal{L}^2(\Omega) \leq \frac{\delta}{2}
    \end{align}
    such that $g$ is continuous when restricted to $U \setminus \Omega$ and by the discussion after Theorem \ref{quasicontinuity} we may assume that (a subsequence of) $g_\varsigma$ converges uniformly on $U \setminus \Omega$. Note that 
    \begin{align}\label{remaining_S}
        \mathcal{L}^2(S \setminus (\Omega\cup N)) \geq \frac{\delta}{2}.
    \end{align}
    Let $f \in W^{1,1}(\R^2, [0,1])$ such that $f \equiv 1$ on $\Omega$ and 
    \begin{align}\label{function_with_low_capacity}
        \int_{\R^2} |Df|\, \di x \leq 2 \Capa_1(\Omega).
    \end{align}
    Now, for every $x \in S \setminus (\Omega\cup N)$, there exists $\e_x >0$ such that for all $y \in B_{\e_x}(x) \setminus \Omega$, it holds 
    \begin{align}\label{g_continuous_away_omega2d}
        \norm{g(x)-g(y)}_{euc} \leq \frac{\zeta}{16C}.
    \end{align}
    Now for each $x \in S \setminus (\Omega\cup N)$ pick an $\rho_x \in (0, \min(r_x, \e_x))$, where $r_x$ is chosen as in the beginning of Step 2 (see the discussion leading up to \eqref{distance_assumption_step2_2d}). Then 
    \begin{align*}
       S \setminus (\Omega\cup N) \subset \bigcup_{x \in S \setminus (\Omega\cup N)} B_{\rho_x}(x). 
    \end{align*}
    The Besicovitch covering theorem states that there exist $C_2$ countable families $T_1, \ldots, T_{C_2} \subset S \setminus (\Omega\cup N)$ such that for each $j \neq j'$, it holds $T_j \cap T_{j'} = \emptyset$ and such that for each $x \neq x' \in T_j$, it holds
    \begin{align*}
        B_{\rho_{x}}(x) \cap B_{\rho_{x'}}(x') = \emptyset.
    \end{align*}
    Then, by the pigeonhole principle, there exists a $j \in \{1, \ldots, C_2\}$ such that 
    \begin{align}\label{disjoint_balls_big_measure}
        \mathcal{L}^2\Big(\bigcup_{x \in T_j} B_{\rho_x}(x)\Big) = \sum_{x \in T_j} \mathcal{L}^2( B_{\rho_x}(x)) \geq \frac{1}{C_2} \mathcal{L}^2(S \setminus (\Omega\cup N)) \geq \frac{\delta}{2C_2}.
    \end{align}
    Write $T_j = \{x_i\}_{i\in \N}$. Now fix an $i \in \N$.  
    Denote $x:= x_i \in T_j$ and write $\rho:= \rho_x$.
    Now, by the second step, more precisely \eqref{discontinuity_of_g_along_line2d}, there exists a $\varsigma_{x, \rho}>0$ such that for each $\varsigma \in (0, \varsigma_{x, \rho})$ there exists a $(\frac{1}{C}, C)$-bi-Lipschitz curve $\gamma = \gamma^{\rho/5C^2}:[0,\rho/5C^2] \to B_{\rho/5C}(x)$ and a set $A \subset [0,\rho/5C^2]$ such that $\mathcal{L}^1(A) \geq \frac{\rho\zeta}{20C^3}$ and for all $y \in \gamma(A)$, it holds
    \begin{align*}
        \norm{g(x)-g_\varsigma(y)}_{euc} \geq \frac{\zeta}{4C}.
    \end{align*}
    Using the locally uniform convergence of $g_\varsigma$ to $g$ on $U\setminus \Omega$, we may choose $\varsigma \in (0, \varsigma_{x, \rho})$ small enough such that additionally 
    \begin{align*}
        \norm{g(y)-g_\varsigma(y)}_{euc} \leq \frac{\zeta}{16C}, \ \mathrm{for\ all\ } y \in U \setminus\Omega,
    \end{align*}
    implying 
     \begin{align*}
        \norm{g(x)-g(y)}_{euc} \geq \frac{\zeta}{8C}, \ \mathrm{for\ all\ } y \in \gamma(A).
    \end{align*}
    Recalling \eqref{g_continuous_away_omega2d}, this implies that $\gamma(A) \subset \Omega \cap B_\rho(x)$. In particular, $O_\gamma:=\gamma^{-1}(\Omega) \subset [0,\rho]$ is an open set with $\mathcal{L}^1(O_\gamma) \geq \frac{\rho\zeta}{20C^3}$. Now, we can find a compact set $K \subset [0,1]$ that is the finite union of compact intervals $I_1,\ldots, I_n$ such that $K \subset O_\lambda$ and $\mathcal{L}^1(K) \geq \frac{\rho\zeta}{40C^3}$.

    Using mollification, we can find a function $\Tilde{f} \in C_c^\infty(\R^2)$ such that $\Tilde{f} \equiv 1$ on $\gamma(K)$ and 
    \begin{align}\label{replace_by_smooth_function}
        \norm{f-\Tilde{f}}_{W^{1,1}(B_{\rho}(x))} \leq 2^{-i-2}\epsilon.
    \end{align}
    By translating, we may assume that $x =0$. Now consider $F(x) := \Tilde{f}(\rho x) \in C_c^\infty(\R^2)$. Using that $\rho \leq 1$, we immediately get that 
    \begin{align}\label{rescale_smooth_function}
        \norm{F}_{W^{1,1}(B_1(0))} \leq \frac{1}{\rho^2} \norm{\Tilde{f}}_{W^{1,1}(B_\rho(0))}.
    \end{align}
    Moreover define the curve $\boldsymbol{\gamma}:[0, \frac{1}{5C^2}] \to B_{1/5C}(0)$ by $\boldsymbol{\gamma}(t) = \frac{1}{\rho}\gamma(\rho t)$. Then $\boldsymbol{\gamma}$ is $(\frac{1}{C}, C)$-bi-Lipschitz continuous.
    By construction, we have that $F$ is a $C_c^\infty$-function that satisfies $F(\boldsymbol{\gamma}(\frac{1}{\rho}K)) = \{1\}$. Using that $\mathcal{L}^1(\frac{1}{\rho}K) \geq \frac{\zeta}{40C^3}$, we may apply Lemma \ref{2dcapacity_of_line_segments}, to get that 
    \begin{align}
        \int_{B_{1}(0)} |F| + |DF| \di y \geq \frac{\zeta}{320C^5}.
    \end{align}
    Combining this with \eqref{rescale_smooth_function} and \eqref{replace_by_smooth_function}, we get that 
    \begin{align}\label{integral_estimate_on_specific_ball}
        \int_{B_{\rho}(0)} |f| + |Df| \di x \geq \frac{\rho^2\zeta}{320C^5} -2^{-i-2}\epsilon
    \end{align}
    Using that $\rho^2 = \mathcal{L}^2(B_\rho(0))/\pi$ together with \eqref{disjoint_balls_big_measure} and \eqref{integral_estimate_on_specific_ball} we get that 
    \begin{align}\label{lower_bound_w11}
        \int_{\R^2} |f| + |Df| \di y \geq \sum_{x \in T_j} \int_{B_{\rho_x}(x)} |f| + |Df| \di y \geq  \frac{\zeta}{320\pi C_2C^5}\mathcal{L}^2(S \setminus (\Omega\cup N)) - \epsilon.
    \end{align}
    Now note that $2$ is the Sobolev conjugate of $p=1$ in $\R^2$. 
    We now distinguish two cases: \\
    \textbf{Case 1.} 
    \begin{align}\label{condition_case_1}
         \int_{\R^2} |f| \di y \geq \frac{\zeta}{640\pi C_2C^5}\mathcal{L}^2(S \setminus (\Omega\cup N)) - \epsilon.
    \end{align}
    Then, by the H\"older inequality, we get that 
    \begin{align}
        \norm{f}_{L^2(\R^2)} \geq \norm{f}_{L^2(U)} \geq  \frac{1}{\sqrt{\mathcal{L}^2(U)}} \Big(\frac{\zeta}{640\pi C_2C^5}\mathcal{L}^2(S \setminus (\Omega\cup N)) - \epsilon \Big).
    \end{align}
    Then, by the Sobolev embedding theorem, we get that there exists a constant $c_{1,2}>0$ such that 
    \begin{align}\label{conclusion_case_1}
        \int_{\R^2} |Df| \di y \geq \frac{c_{1,2}}{\sqrt{\mathcal{L}^2(U)}} \Big(\frac{\zeta}{640\pi C_2C^5}\mathcal{L}^2(S \setminus (\Omega\cup N)) - \epsilon \Big).
    \end{align}
  \textbf{Case 2.}  
      \begin{align}\label{condition_case_2}
         \int_{\R^2} |f| \di y < \frac{\zeta}{640\pi C_2C^5}\mathcal{L}^2(S \setminus (\Omega\cup N)) - \epsilon.
    \end{align}
    Then \eqref{lower_bound_w11} implies that
    \begin{align}\label{conclusion_case2}
         \int_{\R^2} |Df| \di y \geq \frac{\zeta}{640\pi C_2C^5}\mathcal{L}^2(S \setminus (\Omega\cup N)) - \epsilon.
    \end{align}
    As one of case 1 and case 2 must be true, we get that 
    \begin{align}\label{final_bound_norm_df}
        \int_{\R^2} |Df| \di y \geq  \min\Big(1, \frac{c_{1,2}}{\sqrt{\mathcal{L}^2(U)}}\Big)\cdot\Big(\frac{\zeta}{640\pi C_2C^5}\mathcal{L}^2(S \setminus (\Omega\cup N)) - \epsilon\Big).
    \end{align}
    Now this contradicts \eqref{set_eps}, \eqref{Capacity_omega}, and \eqref{remaining_S} and hence proves the proposition.
\end{proof}

\begin{remark}
    Note that Proposition \eqref{2d_metric_speed_sobolev} answers Question \ref{q1} completely as we now know that $g, g^{-1} \in L^\infty_{\rm loc}$ is not sufficient (see \cite{de1995lip, ryborz2026infinitesimal}) but any additional Sobolev assumption gives positive answer. 
\end{remark}
The argument in the third step of the proof of Proposition \ref{2d_metric_speed_sobolev} yields another uniform lower $1$-capacity bound:
\smallskip
\begin{corollary}
    Let $U \subset \R^2$ be open and bounded and $\Omega \subset U$ be a Borel set such that the following holds: There exists $L>1$ and $\alpha \in (0,1)$ such that for every $x\in U$ there exists an $r_x>0$ such that for any $r \in (0,r_x)$ there exists a $(\frac{1}{L}, L)$-bi-Lipschitz curve $\gamma_r:[0,1] \to B_r(x)$ and a Borel set $A_{r, x} \subset [0,1]$ with $\mathcal{L}^1(A_{r, x}) \geq \alpha$ such that $\gamma_{r, x}(A_{r, x}) \subset \Omega$. Then there exists a constant $C=C(U, L, \alpha)$ such that $\Capa_1(\Omega)\geq C$. 
\end{corollary}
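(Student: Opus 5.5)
We follow Step 3 of the proof of Proposition \ref{2d_metric_speed_sobolev}, with the capacity of $\Omega$ as the quantity to be bounded below instead of the quantity to be contradicted. By Theorem \ref{basic_properties_capa} (i) it suffices to bound $\Capa_1(W)$ from below uniformly over open sets $W\supset\Omega$. Fix such a $W$, fix $\e>0$, and pick $f\in K^1$ with $W\subset\{f\geq 1\}^\circ$ and $\int|Df|\,\di x\leq \Capa_1(W)+\e$. Truncating, we may assume $f$ takes values in $[0,1]$. We also assume $U$ has positive measure, since otherwise there is nothing to prove.

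\textbf{Step 1 (covering).} For each $x\in U$ choose $\rho_x\in(0,\min(r_x,1))$ small enough that $B_{4L\rho_x}(x)\subset U'$, where $U'$ is a fixed bounded neighbourhood of $U$. By the Besicovitch covering theorem and the pigeonhole principle, exactly as in \eqref{disjoint_balls_big_measure}, there is a countable family $\{x_i\}$ whose enlarged balls $B_{4L\rho_i}(x_i)$ are pairwise disjoint and satisfy
\begin{align*}
\sum_i \rho_i^2\geq c(L)\,\mathcal{L}^2(U).
\end{align*}
To obtain disjointness of the enlarged balls, apply Besicovitch to the balls $B_{4L\rho_x}(x)$ and then rescale the radii; this is harmless because $\mathcal{L}^2(U)$ enters only through the covering constant $C_2$ and powers of $L$.

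\textbf{Step 2 (local estimate).} For each $i$, take $r=\rho_i$ and the bi-Lipschitz curve $\gamma_{r}$ into $B_{\rho_i}(x_i)$ with its set $A_{r,x_i}$. The set $\gamma_r^{-1}(W)$ is open and contains $A_{r,x_i}$, so it contains a finite union $K$ of disjoint compact intervals with $\mathcal{L}^1(K)\geq\alpha/2$. Mollify $f$ to $\tilde f\in C_c^\infty$ with $\tilde f\equiv 1$ on $\gamma_r(K)$ and $W^{1,1}$-error at most $2^{-i}\e$ on $B_{4L\rho_i}(x_i)$. Translate $x_i$ to $0$ and rescale by $\rho_i$.

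There is a scaling mismatch to handle here. The curve $\gamma_r$ is $(\frac1L,L)$-bi-Lipschitz on $[0,1]$ but lies in $B_r$, so it is not bi-Lipschitz after rescaling space alone. I would therefore reparametrise as $t\mapsto \gamma_r(t)/\rho_i$ and check the hypotheses of Lemma \ref{2dcapacity_of_line_segments}. The lower bound $|\gamma_r(0)-\gamma_r(1)|\geq 1/L$ together with $\gamma_r\subset B_r$ forces $r\geq 1/(2L)$. So for $r<r_x$ small the hypothesis is vacuous unless $r_x$ is bounded below. I would read the statement as intending curves defined on $[0,r]$, in the rescaled sense used in Step 3, and apply the lemma to $\boldsymbol\gamma(t)=\frac1\rho\gamma(\rho t)$. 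The lemma then gives
\begin{align*}
\int_{B_{4L\rho_i}(x_i)}|f|+|Df|\,\di x\geq \frac{\alpha\rho_i^2}{2L}-2^{-i}\e,
\end{align*}
using the rescaling inequality \eqref{rescale_smooth_function}.

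\textbf{Step 3 (summing and splitting).} Summing over the disjoint balls yields
\begin{align*}
\int_{U'}|f|+|Df|\,\di x\geq c(L)\frac{\alpha}{2L}\mathcal{L}^2(U)-\e=:2\beta-\e.
\end{align*}
We then split into two cases as in \eqref{condition_case_1} and \eqref{condition_case_2}. In the first case $\int|Df|\geq\beta-\e$, and we are done. In the second case $\int_{U'}|f|\geq\beta$. H\"older's inequality on the bounded set $U'$ and the Sobolev inequality $c_{1,2}\|f\|_{L^2}\leq\|Df\|_{L^1}$ then give
\begin{align*}
\int|Df|\geq c_{1,2}\,\beta/\sqrt{\mathcal{L}^2(U')}.
\end{align*}
In either case $\Capa_1(W)\geq \min\big(1,c_{1,2}/\sqrt{\mathcal{L}^2(U')}\big)\beta-2\e$, and letting $\e\to0$ and taking the infimum over $W$ concludes.

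The main obstacle is Step 2. We must make the normalisation of the curves consistent so that Lemma \ref{2dcapacity_of_line_segments} applies uniformly at every scale, and we must control the enlarged balls $B_{4L\rho}$ needed by that lemma while keeping them disjoint. My first attempt is the reparametrisation above, which turns the constant $C$ into one depending on $L$, $\alpha$ and $\mathcal{L}^2(U)$. The boundedness of $U$ enters only through $U'$.
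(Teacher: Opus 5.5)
Your proposal is correct and is essentially the paper's argument. The paper proves this corollary only by pointing to Step 3 of the proof of Proposition \ref{2d_metric_speed_sobolev}, and you carry out exactly that scheme: Besicovitch selection of disjoint balls, rescaled application of Lemma \ref{2dcapacity_of_line_segments}, summation, and the $|f|$ versus $|Df|$ case split closed by the Sobolev inequality. Your enlarged balls $B_{4L\rho_i}(x_i)$ actually match the lemma's integration domain more consistently than the paper's own bookkeeping does.

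Your scaling remark is also right, with one correction: the literal hypothesis fails for every choice of $r_x$, not only when $r_x$ is small. So the statement as written is vacuously true, and the meaningful version is the rescaled one, with $\gamma_r$ defined on $[0,r]$ and $\mathcal{L}^1(A_{r,x})\geq\alpha r$. That is the version your argument proves, and you should state the condition $\mathcal{L}^1(A_{r,x})\geq\alpha r$ explicitly.
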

We are now able to state the main result answering Question \ref{q2}. 
\smallskip
\begin{theorem}\label{2d_infinitesimal_hilbertianity}
     Let $M$ be a $2$-manifold and $g$ be a Riemannian metric such that $g, g^{-1}\in W^{1, p}_{\rm loc} \cap L^\infty_{\rm loc}$. Assume furthermore that \eqref{condition_on_measure_bounded_compression} and Assumption \ref{assumption_on_mfd} and  Then $W^{1,2}_w(M) = H^2_1(M)$ and in particular, $(M, \sfd_g, \vol_g)$ is infinitesimally Hilbertian. 
\end{theorem}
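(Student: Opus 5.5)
The plan is to show, for every locally Lipschitz (and then every Sobolev) function $f$, that the minimal weak upper gradient coincides $\vol_g$-a.e.\ with the Riemannian norm of the gradient, $|Df|_w = |\nabla_g f|_g$. Then $\Ch(f)=\frac12\int_M|\nabla_g f|_g^2\,\di\vol_g$. This is a quadratic form, since $g$ is a Riemannian metric a.e., and $W^{1,2}_w(M)=H^2_1(M)$ follows once density of locally Lipschitz functions in energy is available. Density is standard: by \cite{ambrosio2014inventio}, under \eqref{condition_on_measure_bounded_compression}, $\Ch$ is the relaxation of the slope energy of Lipschitz functions. So it suffices to establish the two inequalities for locally Lipschitz $f$, in a coordinate patch $U\subset\R^2$ where $\lambda\,\mathrm{Id}\le g\le\Lambda\,\mathrm{Id}$, with $g$ a.e.\ equal to a Lebesgue representative.

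\textbf{Upper bound $|Df|_w\le|\nabla_g f|_g$.} Fix a test plan $\boldsymbol{\pi}$. By Lemma \ref{testplans_on_transversal}, $\boldsymbol{\pi}$-a.e.\ curve is transversal to the union of $N_g$, the non-differentiability set of $f$, and the exceptional null set of Proposition \ref{2d_metric_speed_sobolev}. For such $\gamma$, which is locally Euclidean-Lipschitz since $\sfd_g$ and $\sfd_{euc}$ are locally equivalent, the chain rule gives $\frac{\di}{\di t}f(\gamma_t)=\di f_{\gamma_t}(\dot\gamma_t)$ for a.e.\ $t$. Hence $|\frac{\di}{\di t}f(\gamma_t)|\le|\nabla_g f|_g(\gamma_t)\,|\dot\gamma_t|_{g(\gamma_t)}$. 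Proposition \ref{2d_metric_speed_sobolev} gives $|\dot\gamma_t|_{g(\gamma_t)}=|\dot\gamma_t|$ for a.e.\ $t$. Integrating in $t$ and in $\boldsymbol{\pi}$ yields \eqref{weakgradient} with $G=|\nabla_g f|_g$. Only the inequality $|\dot\gamma_t|_g\le|\dot\gamma_t|$ is used here, and it is exactly what fails for merely $L^\infty$ metrics.

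\textbf{Lower bound $|Df|_w\ge|\nabla_g f|_g$.} I would use the straight-line test plans of Lemma \ref{metric_speed_exists_ae}. Fix $v$ from a countable dense set of directions. Consider $\boldsymbol{\pi}$ concentrated on $\gamma^x_t=x+r(t-\frac12)v$, with $x$ uniformly distributed in a small ball; its marginals are bounded by $C\mathcal{L}^2$. Applying the weak upper gradient inequality to restrictions of $\boldsymbol{\pi}$ to subintervals and subsets of starting points, one obtains, for a.e.\ $x$ and a.e.\ $t$, the bound $|\di f_{\gamma^x_t}(v)|\le|Df|_w(\gamma^x_t)\,|\dot\gamma^x_t|$. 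This is the usual localisation via Lebesgue differentiation in both $x$ and $t$. Proposition \ref{2d_metric_speed_sobolev}, together with Lemma \ref{metric_speed_exists_ae}, gives $|\dot\gamma^x_t|=|v|_{g(\gamma^x_t)}$ for a.e.\ points. Therefore $|\di f_y(v)|\le|Df|_w(y)|v|_{g(y)}$ for a.e.\ $y$ and all $v$ in the dense set. Taking the supremum over $v$ gives $|\nabla_g f|_g(y)\le|Df|_w(y)$.

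\textbf{Main obstacle.} I expect the main difficulty to be the localisation in the lower bound. The weak upper gradient inequality is integral, so the pointwise directional bound needs Fubini along the lines, the a.e.\ existence of metric speeds (Lemma \ref{metric_speed_exists_ae}), and measurability (Lemma \ref{lemma_directional_metric_speed_measurable}). One also has to check that Proposition \ref{2d_metric_speed_sobolev} applies along $\mathcal{L}^2$-a.e.\ line at a.e.\ time; the transversality argument from Lemma \ref{metric_speed_exists_ae} handles this. A secondary point is passing from locally Lipschitz $f$ to all of $W^{1,2}_w$. For this I would rely on the density in energy of Lipschitz functions and the lower semicontinuity of $\Ch$. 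The parallelogram identity then transfers from Lipschitz functions to the whole domain by approximation.
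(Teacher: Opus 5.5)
Your proposal is correct and follows the same route as the paper's sketch. You identify $|Df|_w=|\nabla_g f|_g$ for regular $f$: the upper bound comes from Proposition \ref{2d_metric_speed_sobolev} along $\boldsymbol{\pi}$-a.e.\ curve (via Lemma \ref{testplans_on_transversal}), and the lower bound from straight-line test plans. You then extend to Sobolev functions and read off quadraticity. The only deviation is the extension step: you use the Ambrosio--Gigli--Savar\'e density in energy of Lipschitz functions together with lower semicontinuity of $\Ch$, which is why you need the identification for Lipschitz rather than $C^1$ functions, correctly handled via Rademacher and transversality. The paper instead defers to \cite[Proposition 4.25, Lemma 4.26]{mondino2025equivalence}. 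Note also that for $H^2_1(M)\subset W^{1,2}_w(M)$ you should mention classical density of smooth functions and closure of weak upper gradients under $L^2$ convergence.
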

\begin{proof}[Sketch of the proof.]
   We will not provide the full proof, as with the above ingredients, it follows the strategy of \cite{mondino2025equivalence}. The main steps are the following:
   \begin{itemize}
       \item For $f \in C^1$, we have that $|Df|_w = |\nabla_g f|_g$ $\vol_g$-almost everywhere. This follows exactly as in \cite[Proposition 4.24]{mondino2025equivalence}, using Proposition \ref{2d_metric_speed_sobolev} and localising the test plan around a Lebesgue point of $g$.  
       \item Following the argument in \cite[Proposition 4.25 and Lemma 4.26]{mondino2025equivalence}, we can infer that $f\in W^{1,2}_w(M)$ if and only if $f\in W^{1,2}_{\rm loc}(M)$ and $|\nabla_gf|_g \in L^2(M)$. In particular, we again get that for $f\in W^{1,2}_{\rm loc}(M)$, it holds $|Df|_w = |\nabla_g f|_g$ $\vol_g$-almost everywhere.
       \item Infinitesimal Hilbertianity then follows as in \cite[Corollary 4.27]{mondino2025equivalence}, from the linearity of differentiation and the fact that $g$ is an inner product on the tangent space. 
   \end{itemize}
\end{proof}

The case $d\geq 3$ is discussed in \cite{zhong2026scoop}.

\section*{Statements and Declarations}
\textbf{Conflict of interest statement.} The author has no relevant financial or non-financial
interests to disclose. The author has no conflict of interest to declare that are relevant to the content of this article.

\textbf{Data Availability.} Data sharing is not applicable to this article as no new data were created or analysed in this study.

\phantomsection
\addcontentsline{toc}{section}{References}
\bibliography{bibliographie}

@article{grant2011synthetic,
  title={Synthetic geometry and generalised functions},
  author={Grant, James DE},
  journal={Novi Sad Journal of Mathematics},
  volume={41},
  pages={75--84},
  year={2011}
}

@article{zhong2026scoop,
author = {Zhong, Deguang},
year = {2026},
month = {09},
pages = {},
title = {The Codimension-one Sobolev threshold for infinitesimal Hilbertianity of low-regularity Riemannian metrics},
doi = {10.13140/RG.2.2.14247.51363}
}

@article{steinbauer2009geroch,
  title={On the {G}eroch--{T}raschen class of metrics},
  author={Steinbauer, Roland and Vickers, James A},
  journal={Classical and Quantum Gravity},
  volume={26},
  number={6},
  pages={065001},
  year={2009},
  publisher={IOP Publishing}
}

@book{evans2025measure,
  title={Measure theory and fine properties of functions},
  author={Evans, Lawrence C and Gariepy, Ronald F},
  year={2025},
  publisher={Chapman and Hall/CRC}
}

@article{ryborz2026infinitesimal,
  title={The Infinitesimal Structure of Manifolds With Non-continuous Riemannian Metrics: V. Ryborz},
  author={Ryborz, Vanessa},
  journal={The Journal of Geometric Analysis},
  volume={36},
  number={9},
  pages={313},
  year={2026},
  publisher={Springer}
}

@article{ambrosio1990metric,
  title={Metric space valued functions of bounded variation},
  author={Ambrosio, Luigi},
  journal={Annali della Scuola Normale Superiore di Pisa-Classe di Scienze},
  volume={17},
  number={3},
  pages={439--478},
  year={1990}
}

@Article{burtscher2012length,
 Author = {Burtscher, Annegret Y.},
 Title = {Length structures on manifolds with continuous {Riemannian} metrics},
 FJournal = {The New York Journal of Mathematics},
 Journal = {New York J. Math.},
 ISSN = {1076-9803},
 Volume = {21},
 Pages = {273--296},
 Year = {2015},
 Language = {English},
 zbMATH = {6456672},
 Zbl = {1342.53054}
}

@Article{norris1997heat,
 Author = {Norris, James R.},
 Title = {Heat kernel asymptotics and the distance function in {Lipschitz} {Riemannian} manifolds},
 FJournal = {Acta Mathematica},
 Journal = {Acta Math.},
 ISSN = {0001-5962},
 Volume = {179},
 Number = {1},
 Pages = {79--103},
 Year = {1997},
 Language = {English},
 DOI = {10.1007/BF02392720},
 zbMATH = {1143394},
 Zbl = {0912.58041}
}

@article{de1988distanza,
  title={Distanza intrinseca su una varieta {R}iemanniana di {L}ipschitz},
  author={De Cecco, Giuseppe and Palmieri, Giuliana},
  journal={Rendiconti del seminario matematico Univers. Politecn. Torino},
  volume={46},
  pages={157--170},
  year={1988},
  publisher={Editrice universitaria Leyrotto e Bella: via Pigafetta 2/E, I 10129 Turin~…}
}

@article{de1991integral,
  title={Integral distance on a {L}ipschitz {R}iemannian manifold},
  author={De Cecco, Giuseppe and Palmieri, Giuliana},
  journal={Mathematische Zeitschrift},
  volume={207},
  number={1},
  pages={223--243},
  year={1991},
  publisher={Springer}
}

@article{de1995lip,
  title={LIP manifolds: from metric to Finslerian structure},
  author={De Cecco, Giuseppe and Palmieri, Giuliana},
  journal={Mathematische Zeitschrift},
  volume={218},
  number={1},
  pages={223--237},
  year={1995},
  publisher={Springer}
}

@article{de1990conversazioni,
  title={Conversazioni di matematica},
  author={De Giorgi, E},
  journal={Quaderni Dip. Mat. Univ. Lecce},
  volume={2},
  year={1990}
}

@article{di2021infinitesimal,
  title={Infinitesimal {H}ilbertianity of Locally {{\(\mathrm{CAT}(\kappa)\)}}-Spaces},
  author={Di Marino, Simone and Gigli, Nicola and Pasqualetto, Enrico and Soultanis, Elefterios},
  journal={The Journal of Geometric Analysis},
  volume={31},
  pages={7621--7685},
  year={2021},
  publisher={Springer}
}

@article{le2023universal,
  title={Universal infinitesimal {H}ilbertianity of sub-{R}iemannian manifolds},
  author={Le Donne, Enrico and Lu{\v{c}}i{\'c}, Danka and Pasqualetto, Enrico},
  journal={Potential Analysis},
  volume={59},
  number={1},
  pages={349--374},
  year={2023},
  publisher={Springer}
}

@article{geroch1987strings,
  title={Strings and other distributional sources in general relativity},
  author={Geroch, Robert and Traschen, Jennie},
  journal={Physical Review D},
  volume={36},
  number={4},
  pages={1017},
  year={1987},
  publisher={APS}
}

@article{hajlasz1996sobolev,
  title={{S}obolev spaces on an arbitrary metric space},
  author={Haj{\l}asz, Piotr},
  journal={Potential Analysis},
  volume={5},
  pages={403--415},
  year={1996},
  publisher={Springer}
}

@article{luvcic2020infinitesimal,
  title={Infinitesimal {H}ilbertianity of weighted {R}iemannian manifolds},
  author={Lu{\v{c}}i{\'c}, Danka and Pasqualetto, Enrico},
  journal={Canadian Mathematical Bulletin},
  volume={63},
  number={1},
  pages={118--140},
  year={2020},
  publisher={Canadian Mathematical Society}
}

@article{kirchheim1994rectifiable,
  title={Rectifiable metric spaces: local structure and regularity of the {H}ausdorff measure},
  author={Kirchheim, Bernd},
  journal={Proceedings of the American Mathematical Society},
  volume={121},
  number={1},
  pages={113--123},
  year={1994}
}

@article {gigli2015differential,
    AUTHOR = {Gigli, Nicola},
     TITLE = {On the differential structure of metric measure spaces and
              applications},
  JOURNAL = {Memoirs of the American Mathematical Society},
    VOLUME = {236},
      YEAR = {2015},
    NUMBER = {1113},
     PAGES = {vi+91},
      ISSN = {0065-9266,1947-6221},
      ISBN = {978-1-4704-1420-7},
   MRCLASS = {53C23 (30L05 30L10 49Q15 58C20)},
  MRNUMBER = {3381131},
MRREVIEWER = {Davide\ Vittone},
       DOI = {10.1090/memo/1113},
       URL = {https://doi.org/10.1090/memo/1113},
}

@article{gigli2014overview,
  title={An overview of the proof of the splitting theorem in spaces with non-negative {R}icci curvature},
  author={Gigli, Nicola},
  journal={Analysis and Geometry in Metric Spaces},
  volume={2},
  number={1},
  pages={20141006},
  year={2014},
  publisher={De Gruyter Open}
}

@article{teleman1983index,
  title={The index of signature operators on {L}ipschitz manifolds},
  author={Teleman, Nicolae},
  journal={Publications Math{\'e}matiques de l'IH{\'E}S},
  volume={58},
  pages={39--78},
  year={1983}
}

@article{graf2020singularity,
	author = {Graf, Melanie},
	journal = {Communications in Mathematical Physics},
	number = {2},
	pages = {1417--1450},
	publisher = {Springer},
	title = {Singularity Theorems for ${C}^1$-{L}orentzian Metrics},
	volume = {378},
	year = {2020}}

@article{ambrosio2014inventio,
	author = {Ambrosio, Luigi and Gigli, Nicola and Savar{\'e}, Giuseppe},
	journal = {Inventiones mathematicae},
	number = {2},
	pages = {289--391},
	publisher = {Springer},
	title = {Calculus and heat flow in metric measure spaces and applications to spaces with {R}icci bounds from below},
	volume = {195},
	year = {2014}}

@article{ambrosio2014duke,
	author = {Ambrosio, Luigi and Gigli, Nicola and Savar{\'e}, Giuseppe},
	journal = {Duke Mathematical Journal},
	number = {7},
	pages = {1405--1490},
	publisher = {Duke University Press},
	title = {Metric measure spaces with {R}iemannian {R}icci curvature bounded from below},
	volume = {163},
	year = {2014}}

@book{grosser2013geometric,
	author = {Grosser, Michael and Kunzinger, Michael and Oberguggenberger, Michael and Steinbauer, Roland},
	publisher = {Springer Science \& Business Media},
	title = {Geometric theory of generalized functions with applications to general relativity},
	volume = {537},
	year = {2013}}

@article{gigli2018nonsmooth,
	author = {Gigli, Nicola},
	journal = {Memoirs of the American Mathematical Society},
	 VOLUME = {251},
        NUMBER = {1196},
	publisher = {American Mathematical Society},
	title = {Nonsmooth differential geometry--An approach tailored for spaces with {R}icci curvature bounded from below},
	year = {2018}}

@article{lott2009ricci,
	author = {Lott, John and Villani, C{\'e}dric},
	journal = {Annals of Mathematics},
	pages = {903--991},
	volume={169},
	publisher = {JSTOR},
	title = {{R}icci curvature for metric-measure spaces via optimal transport},
	year = {2009}}

@article{LottVillaniJFA,
	author = {Lott, John and Villani, C\'{e}dric},
	doi = {10.1016/j.jfa.2006.10.018},
	fjournal = {Journal of Functional Analysis},
	issn = {0022-1236,1096-0783},
	journal = {J. Funct. Anal.},
	mrclass = {53C21 (28A33 46E35 49Q20 53C23)},
	mrnumber = {2311627},
	mrreviewer = {Filippo\ Santambrogio},
	number = {1},
	pages = {311--333},
	title = {Weak curvature conditions and functional inequalities},
	url = {https://doi.org/10.1016/j.jfa.2006.10.018},
	volume = {245},
	year = {2007}}

@article{mondino2025equivalence,
  title={On the equivalence of distributional and synthetic {R}icci curvature lower bounds},
  author={Mondino, Andrea and Ryborz, Vanessa},
  journal={Journal of Functional Analysis},
  pages={111035},
  year={2025},
  publisher={Elsevier}
}

@article{saloff1992uniformly,
  title={Uniformly elliptic operators on {R}iemannian manifolds},
  author={Saloff-Coste, Laurent},
  journal={Journal of Differential Geometry},
  volume={36},
  number={2},
  pages={417--450},
  year={1992},
  publisher={Lehigh University}
}

@Article{sturm1995analysis,
 Author = {Sturm, Karl-Theodor},
 Title = {Analysis on local {Dirichlet} spaces. {II}: {Upper} {Gaussian} estimates for the fundamental solutions of parabolic equations},
 FJournal = {Osaka Journal of Mathematics},
 Journal = {Osaka J. Math.},
 ISSN = {0030-6126},
 Volume = {32},
 Number = {2},
 Pages = {275--312},
 Year = {1995},
 Language = {English},
 zbMATH = {880172},
 Zbl = {0854.35015}
}

@article{sturm2006geometryI,
	author = {Sturm, Karl-Theodor},
	title = {On the geometry of metric measure spaces.  {I}},
	journal={Acta Math.},
	volume={196},
	pages={65--131},
	year = {2006}}

@article{sturm2006geometryII,
	author = {Sturm, Karl-Theodor},
	title = {On the geometry of metric measure spaces. {I}{I}},
	journal={Acta Math.},
	volume={196},
	pages={133--177},
	year = {2006}}

@article{cheeger1999differentiability,
	author = {Cheeger, Jeff},
	journal = {Geometric \& Functional Analysis GAFA},
	pages = {428--517},
	publisher = {Birkh{\"a}user Verlag},
	title = {Differentiability of {L}ipschitz functions on metric measure spaces},
	volume = {9},
	year = {1999}}

@article{shanmugalingam2000newtonian,
  title={Newtonian spaces: an extension of {S}obolev spaces to metric measure spaces},
  author={Shanmugalingam, Nageswari},
  journal={Revista Matem{\'a}tica Iberoamericana},
  volume={16},
  number={2},
  pages={243--279},
  year={2000}
}

@article{sturm1997diffusion,
  title={Is a diffusion process determined by its intrinsic metric?},
  author={Sturm, Karl-Theodor},
  journal={Chaos, Solitons \& Fractals},
  volume={8},
  number={11},
  pages={1855--1860},
  year={1997},
  publisher={Elsevier}
}
\bibliographystyle{abbrv}
\end{document}